\newcommand{\beqn}{\begin{equation}}
\newcommand{\eeqn}{\end{equation}}
\newcommand{\bbmat}{\begin{bmatrix}}
\newcommand{\ebmat}{\end{bmatrix}}
\newcounter{algo}
\newtheorem{algorithm}[algo]{Algorithm}
\newtheorem{definition}{Definition}[section]
\newtheorem{remark}{Remark}[section]
\newtheorem{remark1}{Remark}[section]
\newtheorem{lemma}{Lemma}[section]
\title{A fast adaptive algorithm for scattering from a two dimensional radially-symmetric potential}
\author{Jeremy Hoskins\thanks{
Department of Statistics, University of Chicago, Chicago, IL 60637.
email: jeremyhoskins@uchicago.edu} ,\, 
Vladimir Rokhlin\thanks{Department of Mathematics and Department of Computer Science, Yale University, New Haven, CT, 06511. 
email: rokhlin@cs.yale.edu}}
\date{}
\begin{document}

\maketitle
\abstract{
 In the present paper we describe a simple black box algorithm for efficiently and accurately solving scattering problems related to the scattering of time-harmonic waves from radially-symmetric potentials in two dimensions. The method uses FFTs to convert the problem into a set of decoupled second-kind Fredholm integral equations for the Fourier coefficients of the scattered field. Each of these integral equations are solved using scattering matrices, which exploit certain low-rank properties of the integral operators associated with the integral equations. The performance of the algorithm is illustrated with several numerical examples including scattering from singular and discontinuous potentials. Finally, the above approach can be easily extended to time-dependent problems. After outlining the necessary modifications we show numerical experiments illustrating the performance of the algorithm in this setting.
}

%
%

\section{Introduction}  \label{30}
The scattering of waves from potentials is ubiquitous in applied mathematics and physics, arising {\it inter alia} in geophysics, medical imaging, non-destructive industrial testing, optics, etc. Typically in such applications it is highly desirable to be able to simulate the scattering from a given medium quickly and accurately. This is especially important for solving associated inverse problems; namely, to recover properties of the material from measurements of the scattered field outside the object. Inversion algorithms often require solving the forward problem (ie. determining the scattered field for a given medium) hundreds or thousands of times.

In this paper we describe a fast, adapative, simple, and accurate method for computing the scattering from a radially-symmetric body in two dimensions. The applications are two-fold. Firstly, radially-symmetric geometries are frequently encountered in applications. Secondly, the speed and accuracy of the proposed method allow one easily to validate new algorithms for solving the two-dimensional forward and inverse problems for the Helmholtz equation (or wave equation). See \cite{Gillman2015,les2016,vico2016} and the references therein for a discussion of fast algorithms for general two-dimensional scattering problems.

The remainder of this paper is organized as follows. In Section \ref{prelims} we describe the model and introduce the necessary mathematical tools. In Section \ref{sec:algo} we describe the algorithm for two-dimensional scatttering from radially-symmetric potentials. Numerical illustrations of the algorithm both for fixed frequencies and in the time domain are given in Section \ref{sec:nume}. Finally, Section \ref{sec:concl} discusses future work.

\section{Mathematical Preliminaries}\label{prelims} 
In the frequency domain the displacement in an inhomogeneous fluid satisfies the Helmholtz equation
\begin{align}\label{eqn:2dhelm}
\Delta u({\bf r}) + k^2 (1+Q({\bf r})) u({\bf r}) = 0.
\end{align}
In this paper we restrict our attention to two-dimensional problems for which the potential $Q$ is radially symmetric and compactly supported. The classical approach is to decompose the total field $u$ into the sum of an incident field $u_i$ and a scattered field $u_s.$ In typical problems the incident field $u_i$ is known and satisfies the Helmholtz equation with $Q \equiv 0.$ The scattered field $u_s$ satisfies
\begin{align}
\Delta u_s({\bf r}) + k^2 (1+q({\bf r})) u_s({\bf r}) = -k^2 q({\bf r})u_i({\bf r}),
\end{align}
together with the Sommerfeld radiation condition
\begin{align}
\lim_{r \to \infty} \sqrt{r}\left[\frac{\partial}{\partial r}u_s({\bf r})-ik u_s({\bf r})\right] = 0.
\end{align}

We observe that since the potential $Q$ is radially-symmetric and compactly supported there exists a compactly supported function $q:[0,\infty)$ such that $Q({\bf r}) = q(\|{\bf r}\|)$ for all ${\bf r} \in \mathbb{R}^2,$ where $\|\cdot\|$ denotes the standard Euclidean norm. With some abuse of notation we will also refer to the function $q$ as the potential.

\subsection{Reduction to the radial problem}
In this section we describe the reduction of the radially-symmetric acoustic scattering problem to a set of decoupled one-dimensional scattering problems. To that end, we let $u_m: [0,\infty) \rightarrow \mathbb{C}$ denote the Fourier coefficient of the scattered field $u_s$ with respect to the angle $\theta,$ i.e.
\begin{align}\label{eqn:umdef1}
u_m(r) = \int_0^{2\pi} e^{-im\theta}u_s(r \cos\theta,r \sin\theta)\,{\rm d}\theta.
\end{align}
It is easily shown that $u_m$ satisfies the following ordinary differential equation
\begin{align}\label{eqn:1d}
u_m''(r) + \frac{1}{r}u_m'(r)+k^2[1+q(r)]u_m(r)-\frac{m^2}{r^2}u_m(r)= - k^2 q(r) f_m(r),
\end{align}
where 
\begin{align}
f_m(r) = \int_0^{2\pi} e^{-im\theta}u_i(r \cos\theta,r \sin\theta)\,{\rm d}\theta,
\end{align}
is the Fourier coefficient of the incident field $u_i.$

The following remarks and lemmas summarize results pertaining to the solutions of (\ref{eqn:1d}) (see, for example, \cite{abram}).

\begin{remark1}
On any interval $c<x<d$ on which the source $f$ and the potential $q$ are identically zero the solutions to equation (\ref{eqn:1d}) are a linear combination of the Bessel function $J_m(kr)$ and the Hankel function $H_m(kr).$ 
\end{remark1}

The following lemma characterizes the solution to equation (\ref{eqn:1d}) outside the support of $q.$
\begin{lemma}
Suppose that the potential $q$ is supported on the interval $[a,b]$ and that $u_m$ is the function defined by (\ref{eqn:umdef1}). Then there exists some constant $\mu$ such that $u_m(r) = \mu H_m(kr)$ for all $r \ge b.$ Similarly, there exist constants $A,\alpha$ such that
$$u_m(r) =  \alpha J_m(kr),$$
for all $0 \le r \le a.$ 
\end{lemma}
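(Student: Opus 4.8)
The plan is to treat the two claims separately; in both cases I would first use the preceding Remark to write $u_m$ as an explicit linear combination of $J_m(kr)$ and $H_m(kr)$ on the region where $q$ vanishes, and then invoke a single boundary condition to eliminate one of the two basis solutions.

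For the exterior claim ($r \ge b$), I would begin by noting that since $q$ is supported in $[a,b]$, both the potential term $k^2 q(r) u_m(r)$ and the source term $k^2 q(r) f_m(r)$ in (\ref{eqn:1d}) vanish identically for $r \ge b$. Hence on $[b,\infty)$ the function $u_m$ solves the homogeneous equation and, by the Remark, admits a representation $u_m(r) = \mu H_m(kr) + \nu J_m(kr)$ for some constants $\mu, \nu$. To show $\nu = 0$ I would transfer the Sommerfeld radiation condition satisfied by $u_s$ to its Fourier coefficient $u_m$, and then compare against the large-argument asymptotics of the two basis functions: $H_m(kr)$ behaves like an outgoing wave $r^{-1/2} e^{ikr}$ and satisfies the radiation condition, whereas $J_m(kr)$ contains an incoming component $r^{-1/2} e^{-ikr}$ (equivalently, $J_m = \tfrac{1}{2}(H_m^{(1)} + H_m^{(2)})$) that violates it. Substituting the asymptotics into the radiation condition and isolating the $e^{-ikr}$ term forces $\nu = 0$, which gives $u_m(r) = \mu H_m(kr)$.

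For the interior claim ($0 \le r \le a$), the same observation shows $q \equiv 0$ on $[0,a]$, so again $u_m(r) = \alpha J_m(kr) + \beta H_m(kr)$ there. Here the selecting condition is regularity at the origin rather than behavior at infinity. I would argue that $u_s$ is a genuine (smooth) solution of the free Helmholtz equation on the disk $\{ \| {\bf r} \| < a \}$, since $q$ vanishes there, and that consequently its angular Fourier coefficient $u_m(r)$ stays bounded — in fact $O(r^{|m|})$ — as $r \to 0$. Because $J_m(kr)$ is regular at the origin (vanishing like $r^{|m|}$) while $H_m(kr)$ is singular there (like $\log r$ for $m=0$ and like $r^{-|m|}$ for $m \neq 0$), the boundedness of $u_m$ forces $\beta = 0$, leaving $u_m(r) = \alpha J_m(kr)$.

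I expect the main obstacle to be the rigorous justification of the regularity used in the interior claim: asserting that $u_m$ is bounded at the origin requires knowing that $u_s$ itself is regular on the disk of radius $a$. This follows from elliptic regularity for the Helmholtz equation together with the fact that the potential is supported away from the origin (so the equation has smooth coefficients and no source near ${\bf r} = 0$), but it is the step that needs the most care; the complementary asymptotic matching used in the exterior claim is comparatively routine once the standard Bessel and Hankel asymptotics are in hand.
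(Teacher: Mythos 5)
The paper states this lemma without proof, relying implicitly on the preceding Remark that solutions of (\ref{eqn:1d}) on intervals where $q$ vanishes are linear combinations of $J_m(kr)$ and $H_m(kr)$; your outline fills in exactly the intended standard argument (Sommerfeld radiation condition eliminates the $J_m$ component for $r\ge b$ via the large-argument asymptotics and the decomposition $J_m=\tfrac{1}{2}(H_m^{(1)}+H_m^{(2)})$, while regularity of $u_s$ at the origin, justified by interior elliptic regularity since $q\equiv 0$ there, eliminates the $H_m$ component for $r\le a$). Your argument is correct, and you rightly flag the only delicate points: the radiation condition must hold uniformly in angle to pass to the Fourier coefficients $u_m$, and boundedness of $u_m$ at $r=0$ suffices to force $\beta=0$ since $H_m$ is unbounded there for every $m$.
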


The following lemma provides the Green's function for the homogeneous equation corresponding to (\ref{eqn:1d}), obtained by replacing the right-hand side by an arbitrary function $g \in L^2[a,b]$ and setting $q$ to zero on the left-hand side.
\begin{lemma}
Let $g$ be a function compactly supported in the interval $[a,b].$ For all $m\ge 0$ the function $u_m$ defined by the following formula
\begin{equation}
u_m(r) = -\frac{i\pi}{2}\left[\int_0^r J_m(kt) H_m(kr)  \,g(t)\, t \,{\rm d}t + \int_r^\infty J_m(kr) H_m(kt) \,g(t) \,t\,{\rm d}t\right]
\end{equation}
satisfies the ordinary differential equation
\begin{equation}
u_m''(r)+\frac{1}{r} u_m'(r)+ k^2 u_m(r)-\frac{m^2}{r^2}u_m(r)= g(r).
\end{equation}

\end{lemma}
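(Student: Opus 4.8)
The plan is to verify the claim directly by substituting the proposed formula into the differential operator and checking that every term cancels except for the inhomogeneity $g$. It is convenient first to record the structure of the problem. Writing the left-hand side as $L[u] := u'' + \tfrac1r u' + (k^2 - m^2/r^2)u$ and multiplying through by $r$, the operator takes the Sturm--Liouville form $r L[u] = (r u')' + (k^2 r - m^2/r)u$, with weight $p(r)=r$. The two homogeneous solutions are $J_m(kr)$, regular at the origin, and $H_m(kr)$, satisfying the radiation condition at infinity, and the kernel in the stated formula is precisely the Sturm--Liouville Green's function $-\tfrac{i\pi}{2}J_m(k r_<)H_m(k r_>)$ built from this pair. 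Recognizing this motivates the constant $-\tfrac{i\pi}{2}$, but I would carry out the verification by hand.

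Concretely, I would set $A(r) = \int_0^r J_m(kt)\,g(t)\,t\,{\rm d}t$ and $B(r) = \int_r^\infty H_m(kt)\,g(t)\,t\,{\rm d}t$, so that $u_m(r) = -\tfrac{i\pi}{2}\big[H_m(kr)A(r) + J_m(kr)B(r)\big]$. Differentiating with the Leibniz rule gives $A'(r) = r\,g(r)\,J_m(kr)$ and $B'(r) = -\,r\,g(r)\,H_m(kr)$. The first key observation is that when one forms $u_m'$, the two terms coming from $A'$ and $B'$ combine to $H_m(kr)A'(r) + J_m(kr)B'(r) = r g(r)\big[H_m(kr)J_m(kr) - J_m(kr)H_m(kr)\big] = 0$; the boundary contributions cancel identically because the same product $J_m H_m$ appears in both integrals. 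Hence $u_m'(r) = -\tfrac{i\pi}{2}k\big[H_m'(kr)A(r) + J_m'(kr)B(r)\big]$, with no $g$-terms surviving at first order.

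Differentiating once more, the analogous boundary terms no longer cancel: they produce $k r g(r)\big[H_m'(kr)J_m(kr) - J_m'(kr)H_m(kr)\big]$. The main step is to evaluate this using the Wronskian identity $J_m(x)H_m'(x) - J_m'(x)H_m(x) = \tfrac{2i}{\pi x}$, which collapses the bracket to $\tfrac{2i}{\pi(kr)}$ and, after multiplication by the prefactor $-\tfrac{i\pi}{2}$, yields exactly $g(r)$. The remaining terms of $u_m''$ involve only $A$ and $B$ multiplied by second derivatives of $J_m$ and $H_m$. Assembling $L[u_m]$, the coefficient of $A(r)$ is $-\tfrac{i\pi}{2}\big[k^2 H_m''(kr) + \tfrac{k}{r}H_m'(kr) + (k^2 - m^2/r^2)H_m(kr)\big]$, which vanishes because $H_m(kr)$ satisfies the homogeneous Bessel equation $L[H_m(kr)]=0$; the coefficient of $B(r)$ vanishes for the same reason with $J_m$ in place of $H_m$. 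Only the $g(r)$ term remains, establishing the claim.

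I expect the delicate points to be (i) keeping careful track of the chain-rule factors of $k$ when passing from derivatives in the argument $kr$ to derivatives in $r$, and (ii) correctly normalizing the Wronskian so that the constant $-\tfrac{i\pi}{2}$ reproduces $g$ with the right sign; these are precisely the places where a sign slip or a stray factor of $k$ would leave a spurious multiple of $g$ on the right-hand side. The support and integrability of $g$ in $[a,b]$ guarantee that $A$ and $B$ are well defined and that differentiation under the integral sign is justified, so no further analytic subtleties arise.
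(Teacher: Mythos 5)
Your verification is correct and complete: the paper states this lemma without proof (citing it as classical), and your direct substitution argument --- Leibniz differentiation of $A(r)=\int_0^r J_m(kt)\,g(t)\,t\,{\rm d}t$ and $B(r)=\int_r^\infty H_m(kt)\,g(t)\,t\,{\rm d}t$, cancellation of the first-order boundary terms by continuity of the kernel, and evaluation of the second-order boundary term via the Wronskian $J_m(x)H_m'(x)-J_m'(x)H_m(x)=\tfrac{2i}{\pi x}$ --- is precisely the standard proof being invoked. The bookkeeping checks out, including the factors of $k$ and the constant $-\tfrac{i\pi}{2}\cdot k r g(r)\cdot\tfrac{2i}{\pi k r}=g(r)$; the only implicit convention worth flagging is that the Wronskian identity you use holds for the Hankel function of the first kind, $H_m=H_m^{(1)}$, which is the choice forced by the paper's Sommerfeld radiation condition.
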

We conclude this section with the following definition.
\begin{definition}
For any integer $m$ we define the corresponding function $K_m :(0,\infty)\times (0,\infty) \to \mathbb{C}$ by
\begin{align}\label{eqn:kerdef}
K_m(r,t) = \begin{cases}
 J_m(kt) H_m(kr)  \, t  & {\rm if} \, t \le r,\\
 J_m(kr) H_m(kt)  \, t  & {\rm if} \, t \ge r, 
\end{cases}
\end{align}
Except when necessary we will suppress the subscript $m.$
\end{definition}

\subsection{Integral equations for modes}
In this section we derive integral equations for the Fourier components of the scattered field $u_m,$ $m=0,\pm 1,\pm 2,\dots$  for variable $q.$ The two-point boundary value problem for $u_m$ can be converted into a second-kind integral equation for a new unknown $\rho_m$ by writing $u_m$ in the form
\begin{align}
u_m(r)= \int_0^\infty K(r,t) \rho_m(t)\,{\rm d}t,
\end{align}
Upon substitution of this formula into equation (\ref{eqn:1d}) we obtain
\begin{align}\label{eq:integral_eq}
\rho_m(r) + k^2 q(r) \int_0^\infty K(r,t) \rho_m(t)\,{\rm d}t= -k^2 q(r) f_m(r).
\end{align}
After $\rho_m$ is determined, $u_m$ can be obtained by integration against the kernel $K(r,t)$ defined in (\ref{eqn:kerdef}).

\begin{remark}
From (\ref{eq:integral_eq}) it is clear that the support of $\rho_m$ is contained in the support of $q.$
\end{remark}

\subsection{Scattering matrix formulation}
In this section we define scattering matrices, as well as incoming and outgoing expansion coefficients. Additionally, we provide expressions relating the expansion coefficients and scattering matrices on two adjacent disjoint intervals to the expansion coefficients and scattering matrices on their union. A similar formalism for more general two-point boundary value problems was developed in \cite{rokles}.

Given an interval $A=[a,b]$ and a function $\rho\in L^2[0,\infty)$ we define the left and right outgoing expansion coefficients $\alpha_\ell(A)$ and $\alpha_r(A),$ respectively, by
\begin{align}\label{eqn:alp_def1}
\alpha_\ell(A) &= \int_A J_m(kr) \rho_m(r) r {\rm d}r\\
\label{eqn:alp_def2}
\alpha_r(A) &= -i\frac{\pi}{2}\int_A H_m(kr) \rho_m(r) r {\rm d}r.
\end{align}
Similarly, we define the left and right incoming expansion coefficients $\phi_\ell(A)$ and $\phi_r(A),$ respectively, by
\begin{align}\label{eqn:phi_def1}
\phi_\ell(A) &= \int_0^a J_m(kr) \rho_m(r) r {\rm d}r,\\
\label{eqn:phi_def2}
\phi_r(A) &= -i\frac{\pi}{2}\int_b^\infty H_m(kr) \rho_m(r) r {\rm d}r.
\end{align}
In addition, we let $F_A: L^2[a,b]\to L^2[a,b]$ denote the operator defined by
\begin{equation}\label{eqn:def_F}
F_A(\rho)(r) = \rho(r) +k^2q(r)\int_a^b K(r,t) \rho(t)\,{\rm d}t.
\end{equation}
Next, we define the mapping $V_A: [a,b] \to \mathbb{R}$ by
\begin{align}\label{eqn:Vdef}
V_A(r) = -k^2 F_A^{-1}\left(q f\right)(r),
\end{align}
and the coefficients $\chi_J(A)$ and $\chi_H(A)$ by
\begin{align}\label{eqn:chi_def1}
\chi_J(A) &= \int_A J_m(kr) V_A(r) \,r \,{\rm d}r\\
\label{eqn:chi_def2}
\chi_H(A) &=-\frac{i\pi}{2} \int_A H_m(kr) V_A(r) \, r\, {\rm d}r.
\end{align}

Finally, we denote the restriction of $J_m(kr)$ and $-\frac{i\pi}{2}H_m(kr)$ to the interval $A$ by $J|_A$ and $H|_A,$ respectively, or simply by $J$ and $H,$ respectively, when there is no ambiguity.

The integral equation for $\rho_m$ on an interval $A=[a,b]$ can be written in terms of $\phi_\ell(A),$ $\phi_r(A),$ $V_A$ and $F_A,$ as shown in the following lemma. It also relates these quantities to the outgoing expansion coefficients of $A,$ $\alpha_{\ell,r}(A).$ Its proof is straightforward and omitted.
\begin{lemma}
Suppose that $m$ is a non-negative integer and that $\rho_m$ satisfies the integral equation (\ref{eq:integral_eq}). For an interval $A = [a,b]$ with $0 \le a <b <\infty$ suppose that $\alpha_\ell(A),$ $\alpha_r(A),$ $\phi_\ell(A),$ $\phi_r(A),$ $\chi_J(A)$ and $\chi_H(A)$ are the quantities defined in  (\ref{eqn:alp_def1}), (\ref{eqn:alp_def2}),(\ref{eqn:phi_def1}), (\ref{eqn:phi_def2}), (\ref{eqn:chi_def1}) and (\ref{eqn:chi_def2}), respectively. Further suppose that $F_A:L^2[a,b]\to L^2[a,b]$ is the operator defined by (\ref{eqn:def_F}) and $V_A$ is the function defined by (\ref{eqn:Vdef}). Then
\begin{align}\label{eqn:get_rho}
\rho_m(r) = -\phi_\ell(A) F_A^{-1}(H)(r)-\phi_r(A) F_I^{-1}(J)(r)+ V_A(r).
\end{align}
Moreover,
\begin{align}\label{eqn:alpha_eq2}
\begin{pmatrix}\alpha_\ell(A) \\ \alpha_r(A) \end{pmatrix} =  -\begin{pmatrix} \left<J, F_A^{-1}H \right>_A& \left< J, F_A^{-1}J \right>_A \\ \left< H, F_A^{-1}H \right>_A &
 \left< H, F_A^{-1}J \right>_A 
\end{pmatrix}
\begin{pmatrix}
\phi_\ell(A) \\ \phi_r(A)
\end{pmatrix}
+
\begin{pmatrix}
\chi_J(A)\\
\chi_H(A)
\end{pmatrix},
\end{align}
where $\left< \cdot, \cdot \right>_A$ denotes the standard $L^2$ inner product restricted to $A.$
\end{lemma}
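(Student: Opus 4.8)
The plan is to establish the representation \eqref{eqn:get_rho} first and then to obtain the matrix identity \eqref{eqn:alpha_eq2} by substituting it into the definitions of the outgoing coefficients. The essential device is to split the global integral operator in \eqref{eq:integral_eq} at the two endpoints of $A=[a,b]$ and to exploit the piecewise form of the kernel \eqref{eqn:kerdef}, under which the contributions from outside $A$ factor into the incoming data $\phi_\ell(A)$ and $\phi_r(A)$.

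First I would fix $r\in[a,b]$ and write $\int_0^\infty = \int_0^a+\int_a^b+\int_b^\infty$ in \eqref{eq:integral_eq}. For $t\in[0,a]$ one has $t\le r$, so by \eqref{eqn:kerdef} $K(r,t)=J_m(kt)H_m(kr)\,t$ and the factor $H_m(kr)$ pulls out of the integral, leaving precisely $\phi_\ell(A)$ from \eqref{eqn:phi_def1}. Symmetrically, for $t\in[b,\infty)$ one has $t\ge r$, so $K(r,t)=J_m(kr)H_m(kt)\,t$, the factor $J_m(kr)$ pulls out, and the remaining integral is a fixed multiple of $\phi_r(A)$ from \eqref{eqn:phi_def2}. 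Hence for $r\in[a,b]$ the equation collapses to
\[
\rho_m(r)+k^2q(r)\int_a^b K(r,t)\rho_m(t)\,{\rm d}t = -k^2q(r)f_m(r)-k^2q(r)\bigl[H_m(kr)\,\phi_\ell(A)+c\,J_m(kr)\,\phi_r(A)\bigr],
\]
where the surviving integral over $A$ is exactly the one in the definition \eqref{eqn:def_F} of $F_A$, and $c$ absorbs the normalization constant in \eqref{eqn:phi_def2}.

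The left-hand side is $F_A(\rho_m)$, so applying $F_A^{-1}$ and using linearity, the source term $-k^2qf_m$ produces $V_A$ by \eqref{eqn:Vdef}, while the two bracketed terms produce contributions proportional to $\phi_\ell(A)$ and $\phi_r(A)$; identifying these with $-\phi_\ell(A)F_A^{-1}(H)$ and $-\phi_r(A)F_A^{-1}(J)$ yields \eqref{eqn:get_rho}. For the matrix identity I would then substitute \eqref{eqn:get_rho} into \eqref{eqn:alp_def1} and \eqref{eqn:alp_def2}; observing that, with the natural radial weight $r$ in $\langle\cdot,\cdot\rangle_A$, one has $\alpha_\ell(A)=\langle J,\rho_m\rangle_A$ and $\alpha_r(A)=\langle H,\rho_m\rangle_A$, linearity of the pairing splits each coefficient into three terms. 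The terms paired against $F_A^{-1}(H)$ and $F_A^{-1}(J)$ are exactly the four matrix entries multiplying $\phi_\ell(A),\phi_r(A)$, and the terms paired against $V_A$ reproduce $\chi_J(A)$ and $\chi_H(A)$ via \eqref{eqn:chi_def1} and \eqref{eqn:chi_def2}, giving \eqref{eqn:alpha_eq2}.

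There is no analytic subtlety beyond the (implicit) invertibility of the second-kind Fredholm operator $F_A$ on $L^2[a,b]$. The main obstacle is purely one of bookkeeping: one must track the several multiplicative constants — the factors $-\tfrac{i\pi}{2}$ built into $H=-\tfrac{i\pi}{2}H_m(k\cdot)$ and into $\phi_r(A)$, together with the weight $r$ in $\langle\cdot,\cdot\rangle_A$ — so that the contributions from the outer integrals align with exactly the conventions used for $F_A^{-1}(H)$, $F_A^{-1}(J)$ in \eqref{eqn:get_rho} and for the entries in \eqref{eqn:alpha_eq2}.
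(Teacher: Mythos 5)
The paper does not actually supply a proof of this lemma (it declares the proof straightforward and omits it), and your route is certainly the intended one: split the integral in \eqref{eq:integral_eq} at $a$ and $b$, use the piecewise kernel \eqref{eqn:kerdef} to pull $H_m(kr)$ and $J_m(kr)$ out of the two outer integrals so that they produce $\phi_\ell(A)$ and a multiple of $\phi_r(A)$, invert $F_A$, and then pair the resulting representation of $\rho_m$ against $J$ and $H$ with the $r$-weighted bilinear pairing, the $V_A$ term yielding $\chi_J(A)$ and $\chi_H(A)$. The splitting, the factorization of the outer contributions, and the deduction of the matrix identity \eqref{eqn:alpha_eq2} from \eqref{eqn:get_rho} are all correct.

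The one place where your argument is not mere bookkeeping is the final ``identification.'' After applying $F_A^{-1}$ to your displayed equation, the incoming-field terms are $-\phi_\ell(A)\,F_A^{-1}\bigl(k^2q\,H_m(k\cdot)\bigr)$ and $-c\,\phi_r(A)\,F_A^{-1}\bigl(k^2q\,J_m(k\cdot)\bigr)$, and the multiplier $k^2q(r)$ is a \emph{function} of $r$, not a normalization constant: it cannot be absorbed into your scalar $c$, and it cannot be moved across $F_A^{-1}$, since in general $F_A^{-1}(k^2q\,g)$ is not a scalar multiple of $F_A^{-1}(g)$. So \eqref{eqn:get_rho} as literally stated does not follow from your display; it holds under the convention that the arguments fed to $F_A^{-1}$ carry the weight $k^2q$ --- the same convention the paper uses implicitly in \eqref{eqn:Vdef}, where $V_A=-k^2F_A^{-1}(qf)$ keeps $q$ inside the inverse, and which must likewise be read into the entries $\left<J,F_A^{-1}H\right>_A$, etc., of $S_A$ for \eqref{eqn:alpha_eq2} to be self-consistent. (Relatedly, whether your constant $c$ equals $2i/\pi$ or $1$ depends on whether the factor $-\tfrac{i\pi}{2}$ appearing in the Green's function lemma is restored in the kernel of \eqref{eq:integral_eq}; with that normalization and $H=-\tfrac{i\pi}{2}H_m(k\cdot)$, $J=J_m(k\cdot)$, the bracket is exactly $H\,\phi_\ell(A)+J\,\phi_r(A)$.) You should state this convention explicitly: your assertion that the bracketed terms are merely ``proportional'' to $F_A^{-1}(H)$ and $F_A^{-1}(J)$ is false as written, although the blame is shared with the paper's own loose normalizations (note also the typo $F_I^{-1}$ in \eqref{eqn:get_rho}, and the description of $\left<\cdot,\cdot\right>_A$ as the standard, i.e.\ unweighted, $L^2$ product, which you correctly replace by the $r$-weighted bilinear pairing, without conjugation, so that the definitions \eqref{eqn:alp_def1}--\eqref{eqn:alp_def2} read $\alpha_\ell(A)=\left<J,\rho_m\right>_A$ and $\alpha_r(A)=\left<H,\rho_m\right>_A$).
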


\begin{definition}
The scattering matrix $S_A$ of the interval $A$ is the mapping from the incoming expansion coefficients to the outgoing expansion coefficients in the absence of a source. Specifically, using (\ref{eqn:alpha_eq2}) one finds that
\begin{align}
S_A =  -\begin{pmatrix} \left<J, F_A^{-1}H \right>_A& \left< J, F_A^{-1}J \right>_A \\ \left< H, F_A^{-1}H \right>_A &
 \left< H, F_A^{-1}J \right>_A 
\end{pmatrix}.
\end{align}
\end{definition}

The following lemma relates the expansion coefficients on the union of two intervals to the expansion coefficients on each of the subintervals.

\begin{lemma}\label{lem:merge_lem}
Suppose that $A = [a,b]$ and $B=[b,c]$ for some $0 \le a <b<c<\infty.$ Then
\begin{align}
\alpha_\ell(A\cup B) &= \alpha_\ell(A) + \alpha_\ell(B)\\
\alpha_r(A\cup B) &= \alpha_r(A) + \alpha_r(B)\\ 
\phi_\ell(A \cup B) &= \phi_\ell(A)\\
\phi_r(A \cup B) &= \phi_r(B).
\end{align}
Moreover, if
\begin{align}
L =
\begin{pmatrix} 1 & 0 \\0&1\\1&0\\0&1 \end{pmatrix},
\quad {\rm and} \quad K_{AB}=
\begin{pmatrix}
1&0 &0&(S_A)_{1,1}\\
0&1 &0&(S_A)_{2,1}\\
(S_B)_{2,1}&0 &1&0\\
(S_B)_{2,2}&0 &0&1
\end{pmatrix},
\end{align}
then
\begin{align}
S_{I \cup J} = 
L^T K_{AB}^{-1} \begin{pmatrix} S_A &0 \\ 0&S_B \end{pmatrix} L,\quad 
{\rm and} \quad
\begin{pmatrix} \chi_J (A\cup B) \\ \chi_H(A \cup B) \end{pmatrix} = 
L^TK_{AB}^{-1} \begin{pmatrix} \chi_J (A) \\ \chi_H(A) \\ \chi_J(B) \\ \chi_H(B)\end{pmatrix}.
\end{align}
Finally, if $\chi_J(A),$ $\chi_H(A),$$\chi_J(B),$ $\chi_H(B),$ $\phi_{\ell,r}(A\cup B),$ $S_A,$ $S_B$ and $S_{A\cup B}$ are known then $\alpha_{\ell,r}(A),$ $\phi_{\ell,r}(A),$ $\alpha_{\ell,r}(B),$ and $\phi_{\ell,r}(B)$ can be determined via the following formulas
\begin{align}
\begin{pmatrix}
\alpha_\ell (A) \\ \alpha_r(A) \\ \alpha_\ell(B) \\ \alpha_r(B) 
\end{pmatrix} =K_{AB}^{-1}
\begin{pmatrix} S_A &0 \\ 0&S_B \end{pmatrix}L
\begin{pmatrix}
\phi_\ell (A \cup B)\\ \phi_r(A \cup B) 
\end{pmatrix}+
\begin{pmatrix} \chi_J (A) \\ \chi_H(A) \\ \chi_J(B) \\ \chi_H(B)\end{pmatrix}
\end{align}
\begin{align}
\phi_\ell(A) &= \phi_\ell(A \cup B)\\
\phi_r (A) &= \phi_r (A\cup B) - \alpha_r (B)\\
\phi_\ell (B) &= \phi_\ell (A \cup B) - \alpha_\ell (A)\\
\phi_r(B) &= \phi_r(A \cup B).
\end{align}
\end{lemma}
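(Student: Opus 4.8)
The plan is to reduce the entire statement to the single-interval scattering relation (\ref{eqn:alpha_eq2}), applied on $A$ and on $B$ and coupled through four elementary matching identities at the shared endpoint $r=b$, and then to solve the resulting $4\times4$ linear system explicitly. I would begin with the four scalar relations. Since $A\cup B=[a,c]$ with $A=[a,b]$ and $B=[b,c]$ overlapping only at $r=b$, additivity of the integral over $[a,c]=[a,b]\cup[b,c]$ in (\ref{eqn:alp_def1})--(\ref{eqn:alp_def2}) gives $\alpha_\ell(A\cup B)=\alpha_\ell(A)+\alpha_\ell(B)$ and $\alpha_r(A\cup B)=\alpha_r(A)+\alpha_r(B)$ immediately. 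The identities $\phi_\ell(A\cup B)=\phi_\ell(A)$ and $\phi_r(A\cup B)=\phi_r(B)$ are read directly off (\ref{eqn:phi_def1})--(\ref{eqn:phi_def2}): those integrals run over $[0,a]$ and $[c,\infty)$, and $a$ is the common left endpoint of $A$ and $A\cup B$ while $c$ is the common right endpoint of $B$ and $A\cup B$.

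Next I would establish the four interface relations in the final block of the statement, using that $\rho_m$ is a single density on $[0,\infty)$. For example, $\phi_r(A)=-\tfrac{i\pi}{2}\int_b^\infty H_m(kr)\rho_m(r)\,r\,{\rm d}r$ splits at $r=c$ into an integral over $[b,c]=B$ and one over $[c,\infty)$; the first piece is $\alpha_r(B)$ and the second is $\phi_r(A\cup B)$, which yields $\phi_r(A)=\phi_r(A\cup B)-\alpha_r(B)$ once the sign and the $-\tfrac{i\pi}{2}$ normalization are accounted for. The companion relation $\phi_\ell(B)=\phi_\ell(A\cup B)-\alpha_\ell(A)$ follows in the same way by splitting the $[0,b]$ integral defining $\phi_\ell(B)$ at $r=a$, while $\phi_\ell(A)=\phi_\ell(A\cup B)$ and $\phi_r(B)=\phi_r(A\cup B)$ repeat the first step. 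Conceptually these say that the field radiated leftward by $B$ (resp.\ rightward by $A$) is exactly the additional incoming data that $A$ (resp.\ $B$) receives across the interface.

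With the matching identities in hand, the matrix formulas become linear algebra. I would write (\ref{eqn:alpha_eq2}) for $A$ and for $B$, collecting the outgoing coefficients into $\boldsymbol\alpha=(\alpha_\ell(A),\alpha_r(A),\alpha_\ell(B),\alpha_r(B))^{T}$ and the source coefficients into $\boldsymbol\chi=(\chi_J(A),\chi_H(A),\chi_J(B),\chi_H(B))^{T}$. Substituting the interface relations turns the two incoming $2$-vectors into $L\,(\phi_\ell(A\cup B),\phi_r(A\cup B))^{T}$ corrected by the cross terms $\alpha_\ell(A)$ and $\alpha_r(B)$; moving those cross terms to the left-hand side collects $\boldsymbol\alpha$ against precisely the matrix $K_{AB}$, giving
\[
K_{AB}\,\boldsymbol\alpha=\begin{pmatrix}S_A&0\\0&S_B\end{pmatrix}L\begin{pmatrix}\phi_\ell(A\cup B)\\\phi_r(A\cup B)\end{pmatrix}+\boldsymbol\chi .
\]
Inverting $K_{AB}$ produces the recovery formula for $\boldsymbol\alpha$, after which the two remaining matching identities return $\phi_{\ell,r}(A)$ and $\phi_{\ell,r}(B)$. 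Finally, the additivity relations of the first step are exactly left multiplication by $L^{T}$; applying $L^{T}$ to the solved system and comparing with the source-free form of (\ref{eqn:alpha_eq2}) on $A\cup B$ reads off $S_{A\cup B}=L^{T}K_{AB}^{-1}\left(\begin{smallmatrix}S_A&0\\0&S_B\end{smallmatrix}\right)L$ and $(\chi_J(A\cup B),\chi_H(A\cup B))^{T}=L^{T}K_{AB}^{-1}\boldsymbol\chi$.

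The main obstacle is entirely the bookkeeping in the assembly step: one must identify which outgoing coefficient of each subinterval reappears as which incoming coefficient of its neighbour, and with what sign and $-\tfrac{i\pi}{2}$ normalization, so that the coupling terms populate exactly the off-diagonal entries of $K_{AB}$ rather than a sign- or index-permuted variant. This is where the conventions built into (\ref{eqn:alp_def1})--(\ref{eqn:chi_def2}) --- in particular the $-\tfrac{i\pi}{2}$ factors carried by $\alpha_r$, $\phi_r$, $\chi_H$ and by the symbol $H|_A$ --- must be tracked with care. Once the linear system is populated correctly, inverting $K_{AB}$ and contracting by $L^{T}$ is routine, so the entire difficulty is concentrated in getting the interface coupling and its normalization right.
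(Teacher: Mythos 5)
Your proposal is correct and takes essentially the same route as the paper's own proof: you derive the matching identities (\ref{eq:phiij1})--(\ref{eq:phiij4}) from the definitions (the paper simply asserts them as clear), apply (\ref{eqn:alpha_eq2}) on $A$ and on $B$, move the cross terms $\alpha_\ell(A)$ and $\alpha_r(B)$ to the left-hand side to assemble $K_{AB}$, and then invert and contract with $L^T$, identifying $S_{A\cup B}$ and $\chi_{J,H}(A\cup B)$ by comparison with (\ref{eqn:alpha_eq2}) on $A\cup B$. Your only departure is presentational: you spell out the integral-splitting derivation of the interface identities (and flag the $-\tfrac{i\pi}{2}$ normalization bookkeeping) where the paper states them without derivation.
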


\begin{proof}
From the definitions of $\phi_{\ell,r}$ it is clear that
\begin{align}
\phi_\ell(A) &= \phi_\ell(A \cup B) \label{eq:phiij1},\\
\phi_\ell (B) &= \phi_\ell (A \cup B) - \alpha_\ell (A)\label{eq:phiij2},\\
\phi_r (A) &= \phi_r (A\cup B) - \alpha_r (B)\label{eq:phiij3},\\
\phi_r(B) & = \phi_r(A \cup B) \label{eq:phiij4}
\end{align}

Moreover, applying equation (\ref{eqn:alpha_eq2}) to the intervals $A$ and $B$ yields
\begin{align}
\begin{pmatrix}
\alpha_\ell (A) \\ \alpha_r(A) \\ \alpha_\ell(B) \\ \alpha_r(B)
\end{pmatrix}=
\begin{pmatrix} S_A &0 \\ 0&S_B \end{pmatrix}
\begin{pmatrix} \phi_\ell (A) \\ \phi_r(A) \\ \phi_\ell(B) \\ \phi_r(B)\end{pmatrix}+
\begin{pmatrix} \chi_J (A) \\ \chi_H(A) \\ \chi_J(B) \\ \chi_H(B)\end{pmatrix}.
\end{align}
Using identities (\ref{eq:phiij1})-(\ref{eq:phiij4}) we obtain
\begin{flalign}\label{eqn:scat111}
\begin{pmatrix}
1&0 &0&(S_A)_{1,1}\\
0&1 &0&(S_A)_{2,1}\\
(S_B)_{2,1}&0 &1&0\\
(S_B)_{2,2}&0 &0&1
\end{pmatrix}\begin{pmatrix}
\alpha_\ell (A) \\ \alpha_r(A) \\ \alpha_\ell(B) \\ \alpha_r(B)
\end{pmatrix} = \begin{pmatrix} S_A &0 \\ 0&S_B \end{pmatrix}
\begin{pmatrix}
1 & 0\\
0 & 1\\
1 & 0\\
0& 1
\end{pmatrix}
\begin{pmatrix}
\phi_\ell (A \cup B)\\ \phi_r(A \cup B) 
\end{pmatrix}
+\begin{pmatrix} \chi_J (A) \\ \chi_H(A) \\ \chi_J(B) \\ \chi_H(B)\end{pmatrix}.&&
\end{flalign}
Defining
\begin{align}
L =
\begin{pmatrix} 1 & 0 \\0&1\\1&0\\0&1 \end{pmatrix},
\quad {\rm and} \quad K_{AB}=
\begin{pmatrix}
1&0 &0&(S_A)_{1,1}\\
0&1 &0&(S_A)_{2,1}\\
(S_B)_{2,1}&0 &1&0\\
(S_B)_{2,2}&0 &0&1
\end{pmatrix},
\end{align}
and substituting them into (\ref{eqn:scat111}) yields
\begin{align}
\begin{pmatrix}
\alpha_\ell (A \cup B)\\ \alpha_r(A \cup B) 
\end{pmatrix}
= L^T K_{AB}^{-1} \begin{pmatrix} S_A &0 \\ 0&S_B \end{pmatrix} L
\begin{pmatrix}
\phi_\ell (A \cup B)\\ \phi_r(A \cup B) 
\end{pmatrix}
+L^TK_{AB}^{-1} \begin{pmatrix} \chi_J (A) \\ \chi_H(A) \\ \chi_J(B) \\ \chi_H(B)\end{pmatrix}.
\end{align}
Comparing with (\ref{eqn:alpha_eq2}) it follows that
\begin{align}
S_{A \cup B} = 
L^T K_{AB}^{-1} \begin{pmatrix} S_A &0 \\ 0&S_B \end{pmatrix} L,\quad 
{\rm and} \quad
\begin{pmatrix} \chi_J (A\cup B) \\ \chi_H(A \cup B) \end{pmatrix} = 
L^TK_{AB}^{-1} \begin{pmatrix} \chi_J (A) \\ \chi_H(A) \\ \chi_J(B) \\ \chi_H(B)\end{pmatrix}.
\end{align}

\end{proof}

\section{The algorithm}\label{sec:algo}
In this section we describe an algorithm for solving the Helmholtz scattering problem from a radially-symmetric potential in two dimensions
\begin{align}
\Delta u +k^2 (1+q(\|{\bf r}\|) u = -k^2 q(\|{\bf r}\|) u_i({\bf r})
\end{align}
where $u_i$ is the incident field, $q$ is supported on the interval $[a,b],$ and the scattered field $u$ satisfies the Sommerfeld radiation condition
\begin{align}
\lim_{r\to \infty} \sqrt{r} \left[ \frac{\partial}{\partial r} u({\bf r}) - ik u({\bf r})\right] = 0.
\end{align}
In the following it is assumed that the incident field satisfies the homogeneous Helmholtz equation
\begin{align}
\Delta u_i +k^2  u_i = 0, \quad a\le |{\bf r}|\le b.
\end{align}

\subsection{Description of the algorithm}

For simplicity we describe the algorithm assuming that $a=0$ and $q$ is smooth, though the approach can be easily extended to the case where $a \neq 0$ and $q$ is piecewise smooth. As input the algorithm takes an outer radius $b,$ an accuracy $\epsilon,$ a frequency $k \in \mathbb{C},$ and functions which return the values of the incoming field $u_i$ and the potential $q.$ We note that $q$ is required to be radially-symmetric but $u_i$ is not. As output it returns the scattered field $u_s$ to within a specified accuracy $\epsilon.$ 

\begin{algorithm}[]~
\label{algo_qumu}

\noindent

\noindent Input: $b$, $k,$ $\epsilon,$ $q(r)$ and $u_i(x,y).$\\
\noindent Output: A function $u_s(x,y)$ valid for all $(x,y) \in \mathbb{R}^2.$

\begin{enumerate}
\item[Step 1.] Computing the Fourier coefficients of $u_i:$ Sample $u_i$ at 4000 equispaced points on the circle of radius $R.$ Denote these values by $\tilde{u}_j,$ $j=1,\dots,4000.$ Compute the FFT of $\tilde{u}_j$ and compute the smallest integer $M$ such that $|\tilde{u}_j|< \epsilon/10$ for all $j\in [M,4000-M].$ If no such $M$ exists double the number of sampling points.\\
\vspace{0.1 cm}

For each $m=0,\dots,M,$ calculate $\rho_m$ using the following procedure:
\item[Step 2.] Set $i=0,$ $ R_0 = b,$ $R_{\rm min} = \inf_{r>0} |J_m(rk)| < \epsilon/10.$ \\
While $R_i > R_{\rm min}$
\begin{enumerate}
\item Set $ r = \max\{R_i-\pi/k,R_{\rm min}\}.$
\item Set $J^* = \max_{[r,R_i]} |J_m(kr)|,$ $H^* = \max_{[r,R_i]} |H_m(kr)|,$ $q^* = \max_{[r,R_i]} |q(r)|.$ Compute the first 48 coefficients in the Chebyshev expansions of $q(r),$ $H_m(kr),$ $J_m(kr)$ on the interval $R_i-r.$ Denote them by $q^i_j,$ $J^i_j$ and $H^i_j$ respectively, where $j = 1,\dots,48.$ Calculate 
$$E := (R_i-r)\max_{j=13,\dots,48} \left\{ \frac{q^i_j}{q^*},\frac{J^i_j}{J^*},\frac{H^i_j}{H^*}\right\}.$$
If $E > \epsilon/10$ then set $r=\frac{1}{2}(R_i+r)$ and repeat.
\item For scattering from an external field set the $i$th source vector to be $f^i_j = J_m(kr_j)$ where $r_j,$ $j=1,\dots,48$ are the 48-point Chebyshev quadrature nodes translated and scaled to the interval $[r,R_i].$
\item Set $I_i=[r,R_i]$ and discretize the operator $F_I$ defined in equations (\ref{eqn:def_F}) using the quadrature nodes $r_1,\dots,r_{48}.$
\item Calculate $V_{I_i},$ $S_{I_i},$ $\alpha_\ell({I_i}),$ $\alpha_r({I_i}),$ $\chi_J(I_i)$ and $\chi_H(I_i)$ defined in (\ref{eqn:alp_def1}),(\ref{eqn:alp_def2}),(\ref{eqn:chi_def1}), and (\ref{eqn:chi_def2}) respectively.
\item Set $i=i+1$ and $R_i = r.$
\end{enumerate}
Let $N$ denote the total number of intervals required.
\item[Step 3.] Merging the intervals: starting from the outermost interval $[R_1,R_0]$ merge each interval using the results of Lemma \ref{lem:merge_lem}. In particular, for each $i=1,\dots,N$ compute and store the scattering matrices $S_{[R_i,R_0]},$ the expansion coefficients $\alpha_\ell([R_i,R_0]),$ $\alpha_r([R_i,R_0])$ and the source coefficients $\chi_J([R_i,R_0]),$ $\chi_H([R_i,R_0]).$
\item[Step 4.] Given $\chi_J([R_N,R_0]),$ and $\chi_H([R_N,R_0])$ starting from the innermost level compute  the incoming expansion coefficients $\phi_\ell([R_i,R_{i-1}]),$ $\phi_r([R_i,R_{i-1}])$ using the results of Lemma \ref{lem:merge_lem}. Finally, on each interval $[R_{i},R_{i-1}]$ obtain $\rho_m$ via equation (\ref{eqn:get_rho}).
\item[Step 5.] On each interval $I = [R_{i+1},R_i],$ obtain the solution $u_m$ via the equation
\begin{align}
u_m(r) =  \frac{f_m(kb)}{J_m(kb)}[F_I \rho_m(r) -i \frac{\pi}{2} \phi_\ell(I) H_m(kr)+ \phi_r(I) J_m(kr)].
\end{align}
\item[Step 6.] For $m <0$ obtain the solution $u_m$ are obtained in an analogous way.
\end{enumerate}
\end{algorithm}

\begin{remark1}
The complexity of the above algorithm is $ O(M \log M+ \bar{N}M)$ where $\bar{N}$ denotes the average number of discretization points per mode and grows like $b/k$ for large $k.$ For large $M$ a significant amount of time is spent solely in computing values of the Bessel and Hankel functions $J_m(kr)$ and $H_m(kr),$ respectively. This computational cost can be dramatically reduced by using asymptotic formulae for $J_m(kr)$ and $H_m(kr)$ (see, for example, \cite{abram}) as well as non-oscillatory phase functions \cite{brem_nonosc}.
\end{remark1}

\section{Numerical results}\label{sec:nume}
The algorithm described in Algorithm \ref{algo_qumu} which solves the two-dimensional scattering problem from radially-symmetric potentials was implemented in GFortran and experiments were run on a  2.7 GHz Apple laptop with 8 Gb RAM. Specifically, we consider scattering from a disk of radius $2\pi$ with the following potentials: 
\begin{enumerate}
\item A Gaussian bump with potential $q(r) = e^{-r^2}$ (see Figure \ref{fig:gaupot})
\item A random discontinuous media: 20 points are uniformly sampled from the interval $[0,2\pi].$ At each such point the potential switches from $0$ to $1$ or vice versa (see Figure \ref{fig:dispot}) 
\item An Eaton lens (\cite{eaton}) with potential $q$ satisfying the equation
$$ 1+q(r)= \frac{2\pi}{\sqrt{1+q(r)} r}+  \sqrt{\left(\frac{2\pi}{\sqrt{1+q(r)} r}\right)^2-1},$$
(see Figure \ref{fig:eatpot}) 
\item A Luneburg lens (\cite{luneburg}) with potential $q(r) = 1-\frac{r^2}{4\pi^2},$ (see Figure \ref{fig:lunpot}).
\end{enumerate}
The incoming field is chosen to be one of the following two functions:
\begin{enumerate}
\item An incoming plane wave: 
$$f({\bf r}) = \exp (ikx/2+i\sqrt{3} ky/2),$$
where ${\bf r} = (x,y).$
\item A Gaussian beam: 
$$f({\bf r}) = H_0 \left(k \sqrt{(x+16-8i)^2+y^2} \right)\,\,e^{-7.859 k}, $$
where ${\bf r} = (x,y).$
\end{enumerate}
In all experiments the accuracy $\epsilon$ was set to $10^{-13}.$ The number of modes needed and the total time per solve are summarized in Table \ref{tabl_erro}. Plots of the magnitude of the field $|u|$ are given in Figure \ref{fig:gausol}, Figure \ref{fig:dissol}, and Figure \ref{fig:eatsol}. In order to demonstrate the accuracy of the solution we compute the error function $E: \mathbb{R}^2 \to [0,\infty)$ defined by
\begin{align}
E({\bf x}) = \left|\frac{1}{k^2} \Delta u({\bf r}) + (1+Q({\bf r})) u({\bf r}) \right|,
\end{align}
log plots of which are shown in Figure \ref{fig:gauerr}, Figure \ref{fig:diserr}, and Figure \ref{fig:eaterr}. Finally, Figure \ref{fig:modedep} shows the typical behavior of the number of spatial discretization nodes used as a function of the mode number $m.$

\begin{singlespace}
\begin{table}
\begin{tabular}{c|c|c|c|c}
\makecell{Potential $q$} &Source & Frequency $k$ & \makecell{Number  of modes} & \makecell{Solve  time (s)}  \\
\hline
Gaussian& plane wave & 100 & 711 & 36.07\\
Random& plane wave & 30 & 245 & 7.551\\
Eaton& Gaussian beam & 30 & 233 & 6.684

\end{tabular}
\caption{Numerical results for time-harmonic scattering from radially-symmetric potentials.}\label{tabl_erro}
\end{table}
\end{singlespace}

        \begin{figure}
        \centering
        \includegraphics[width=0.4\textwidth]{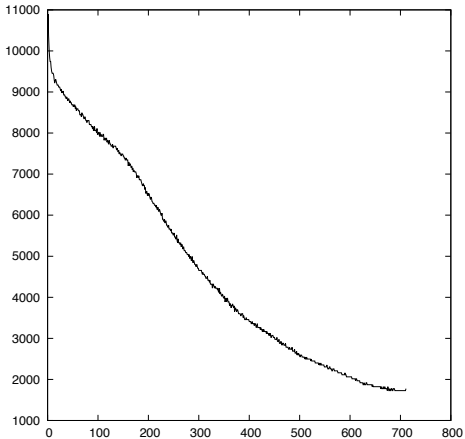}
        \caption{Number of radial points versus mode number}
        \label{fig:modedep}
\end{figure}

\begin{figure}
    \centering
    \begin{subfigure}[b]{0.45\textwidth}
        \includegraphics[width=0.85\textwidth]{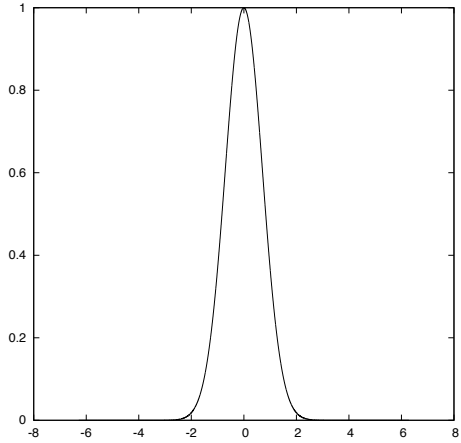}
        \caption{Radial section of the Gaussian bump}
        \label{fig:gaupot}
    \end{subfigure}
    ~ 
    \begin{subfigure}[b]{0.45\textwidth}
        \includegraphics[width=\textwidth]{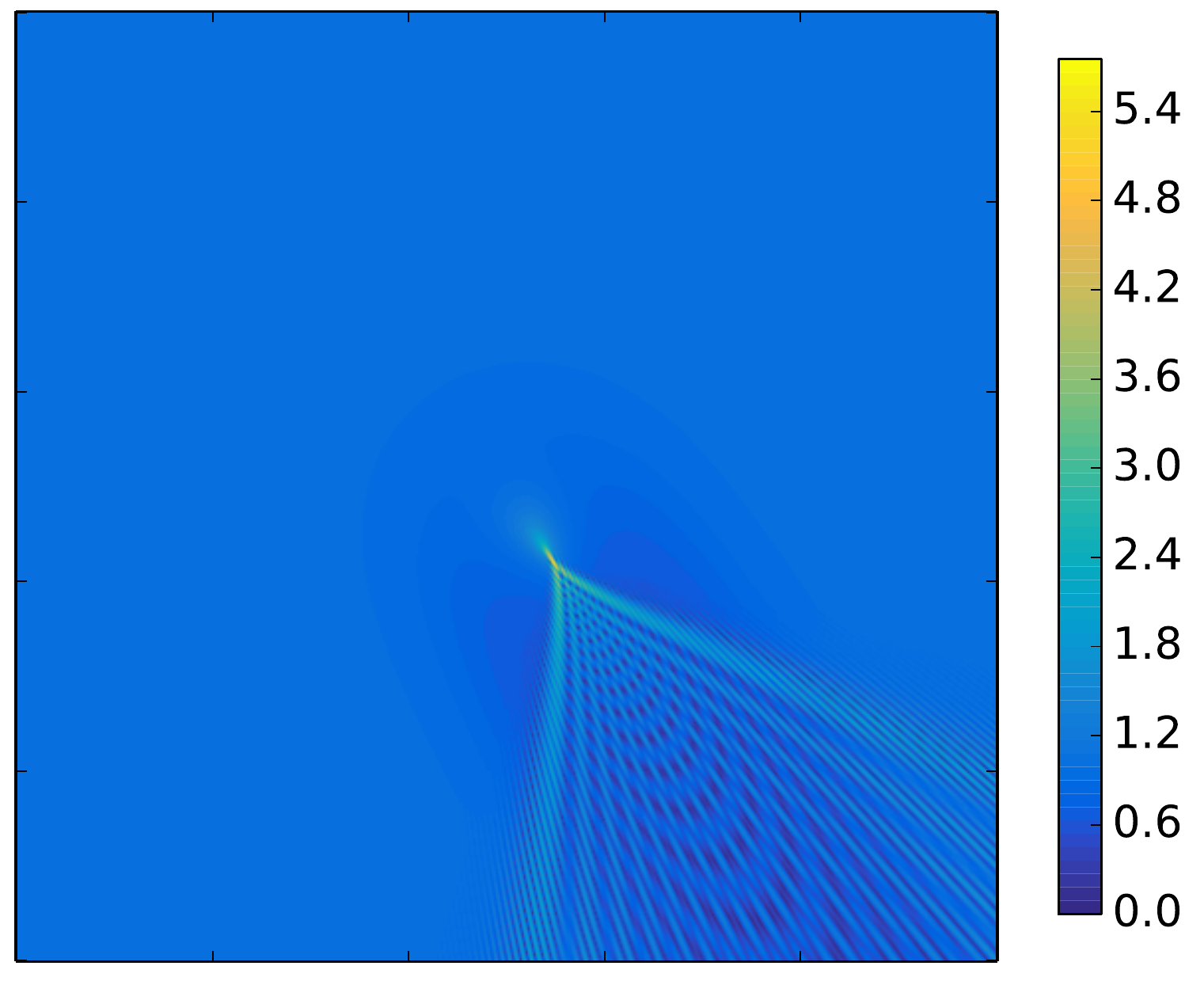}
        \caption{Magnitude of the total field $|u|$}
        \label{fig:gausol}
    \end{subfigure}
    \vspace{0.5 cm}
    
    ~ 
    \begin{subfigure}[b]{0.45\textwidth}
        \includegraphics[width=\textwidth]{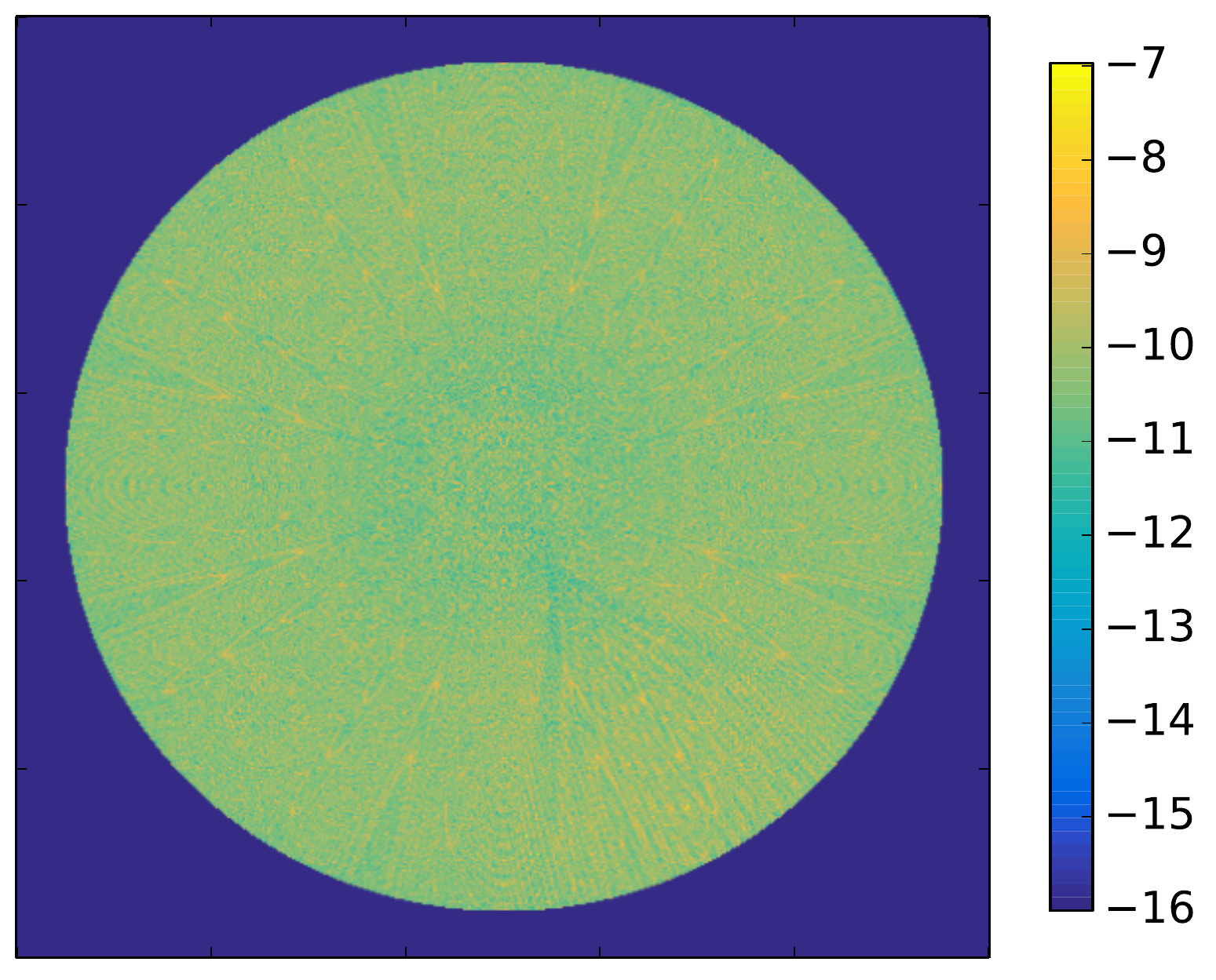}
        \caption{Error in the solution ($\log_{10}$): \\$\hspace{ 0.5 cm}\log_{10}( |\Delta u +k^2(1+q(\|{\bf r}\|))u|/k^2)$}
        \label{fig:gauerr}
    \end{subfigure}
    \caption{Numerical results for the scattering of a plane wave from a Gaussian potential with a standard deviation of $1$ at frequency $k =100.$}\label{fig:gauss}
\end{figure}

\begin{figure}
    \centering
    \begin{subfigure}[b]{0.45\textwidth}
    \centering
        \includegraphics[width=0.85 \textwidth]{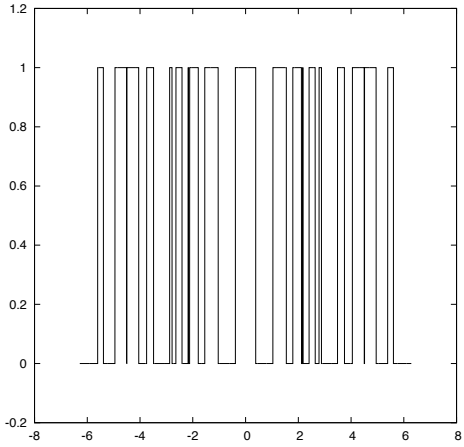}
        \caption{Radial section of the random discontinuous medium.}
        \label{fig:dispot}
    \end{subfigure}
    ~ 
    \begin{subfigure}[b]{0.45\textwidth}
        \includegraphics[width=\textwidth]{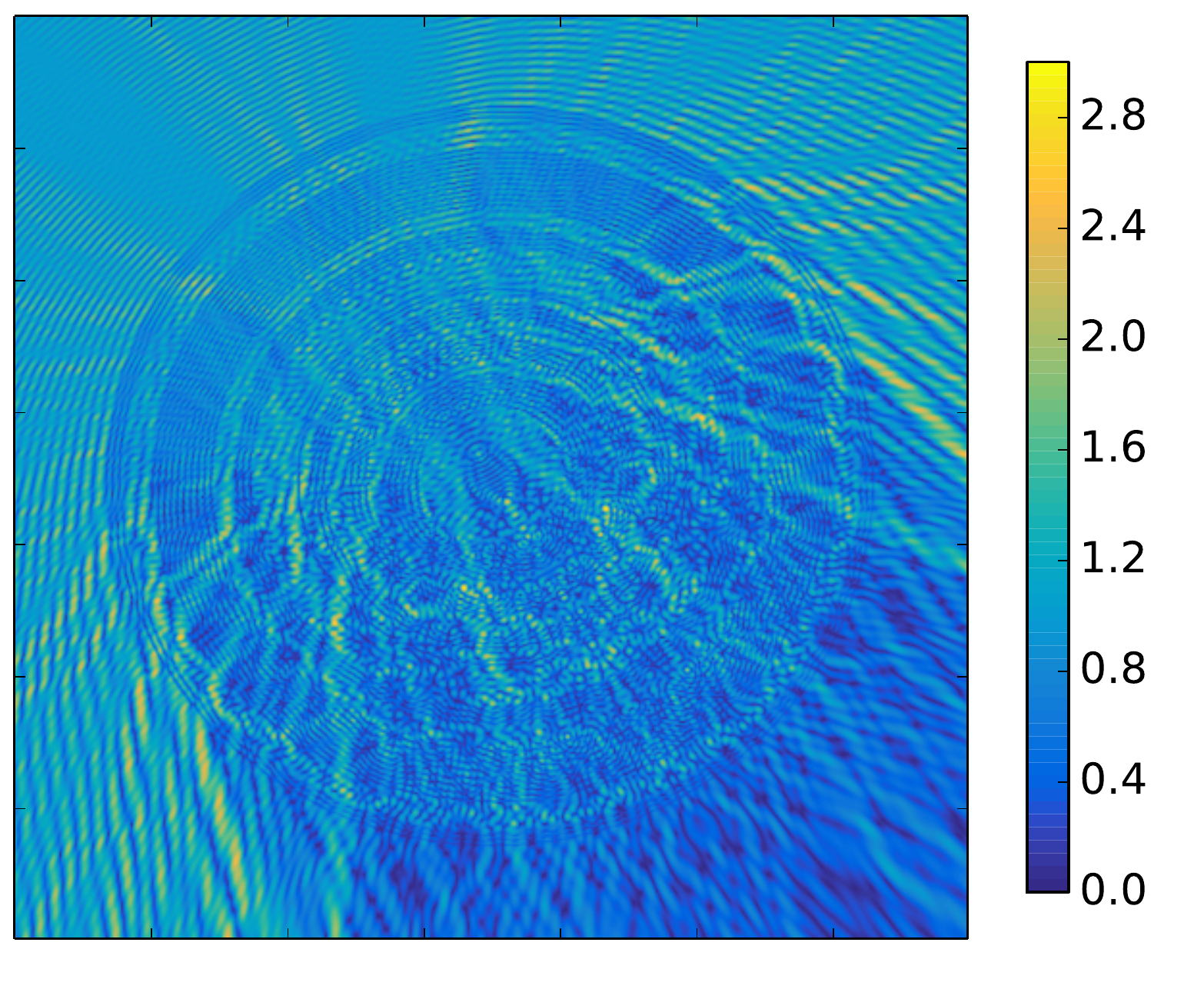}
        \caption{Magnitude of the total field $|u(\bf r)|$}
        \label{fig:dissol}
    \end{subfigure}
    ~ 
    \begin{subfigure}[b]{0.45\textwidth}
    \centering
        \includegraphics[width=\textwidth]{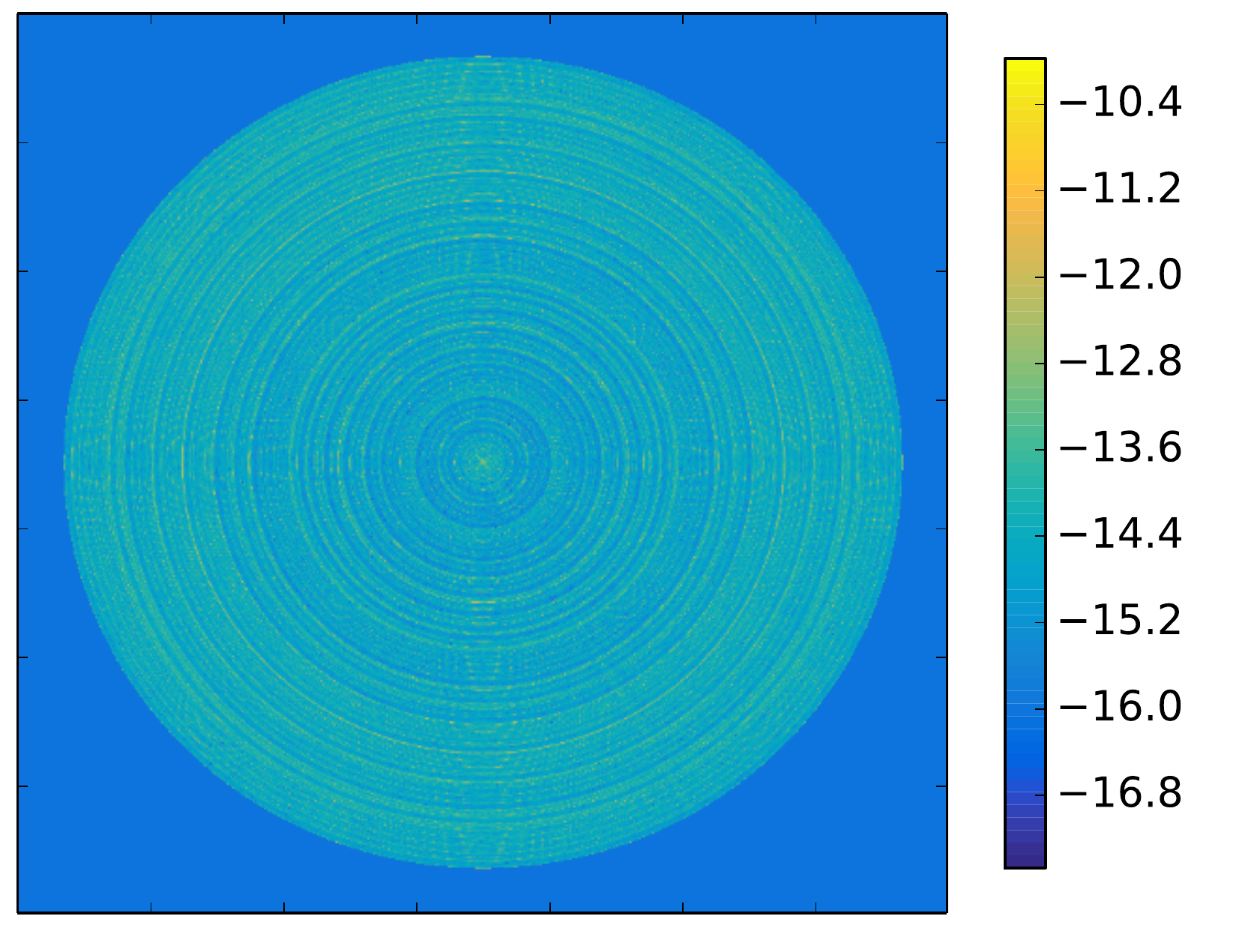}
        \caption{Error in the solution ($\log_{10}$): \\$\hspace{ 0.5 cm}\log_{10} (|\Delta u +k^2(1+q({\bf r}))u|/k^2)$ generated in extended precision}
        \label{fig:diserr}
    \end{subfigure}
    \caption{Numerical results for the scattering of a plane wave from a discontinuous potential with diameter $4\pi$ at a frequency of $k=30.$}\label{fig:disco}
\end{figure}

\begin{figure}
    \centering
    \begin{subfigure}[b]{0.45\textwidth}
    \centering
        \includegraphics[width=0.85 \textwidth]{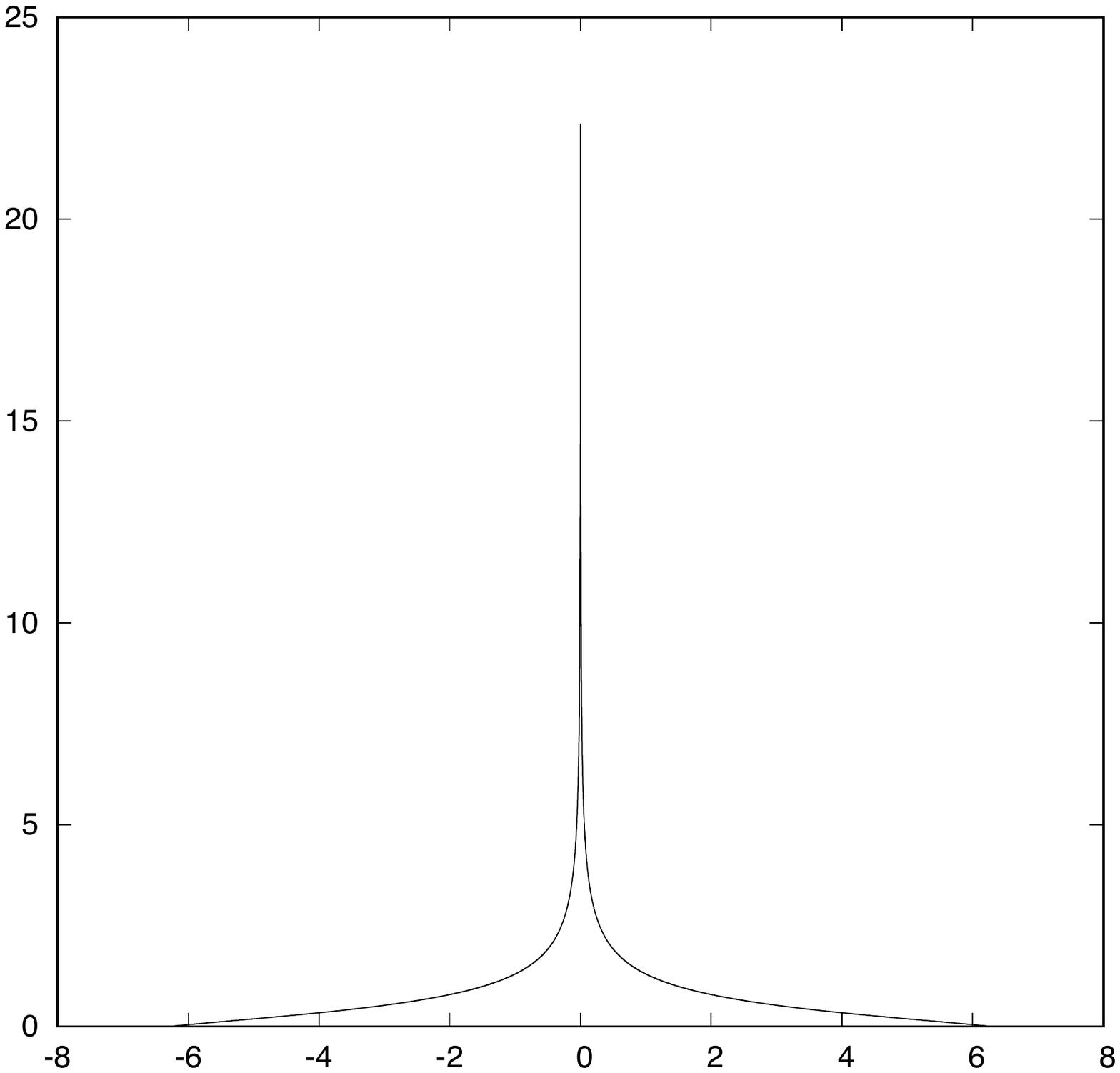}
        \caption{Radial section of the Eaton lens}
        \label{fig:eatpot}
    \end{subfigure}
    ~ 
    \begin{subfigure}[b]{0.45\textwidth}
        \includegraphics[width=\textwidth]{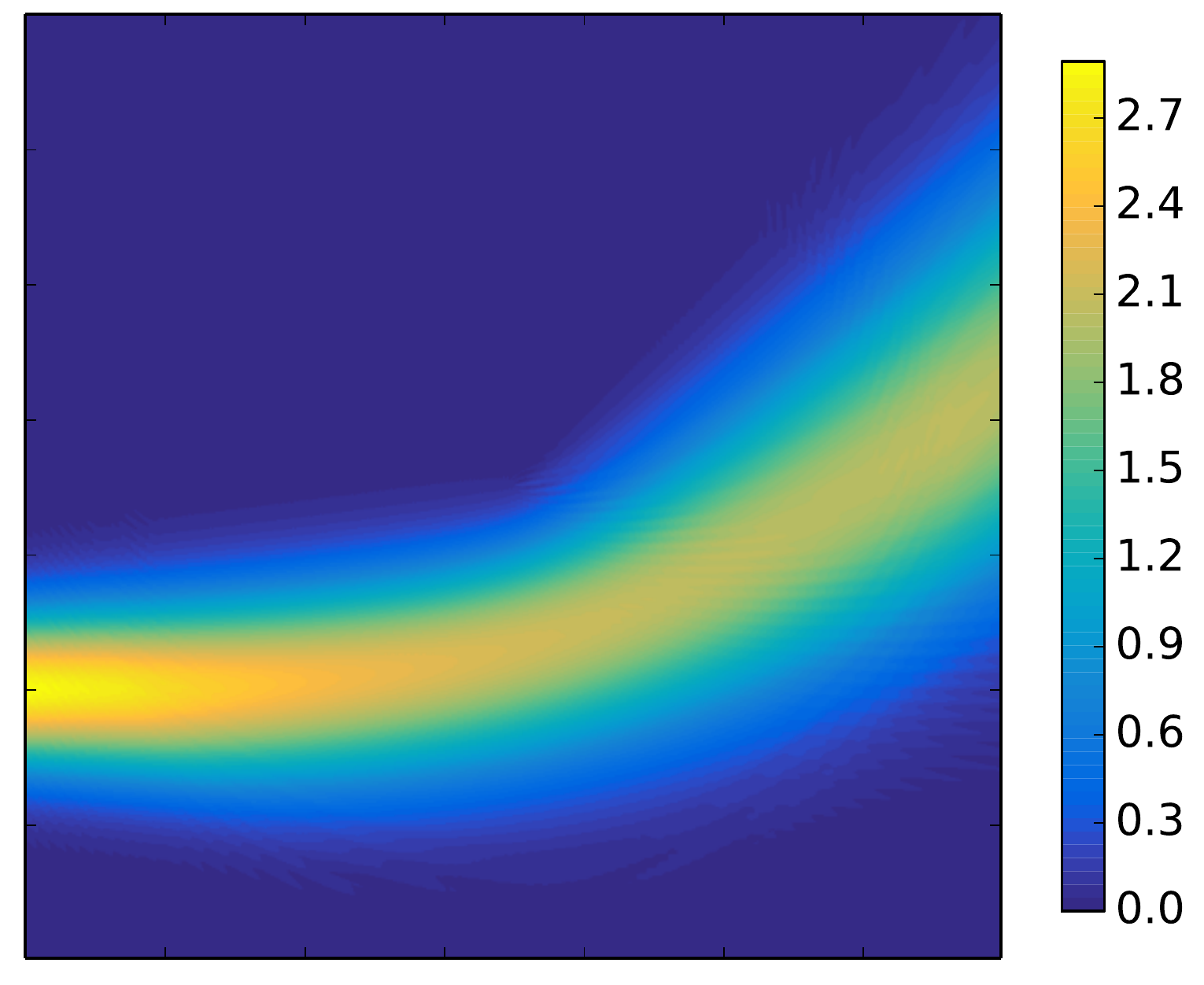}
        \caption{Magnitude of the total field $|u|$}
        \label{fig:eatsol}
    \end{subfigure}
    ~ 
    \begin{subfigure}[b]{0.45\textwidth}
    \centering
        \includegraphics[width=\textwidth]{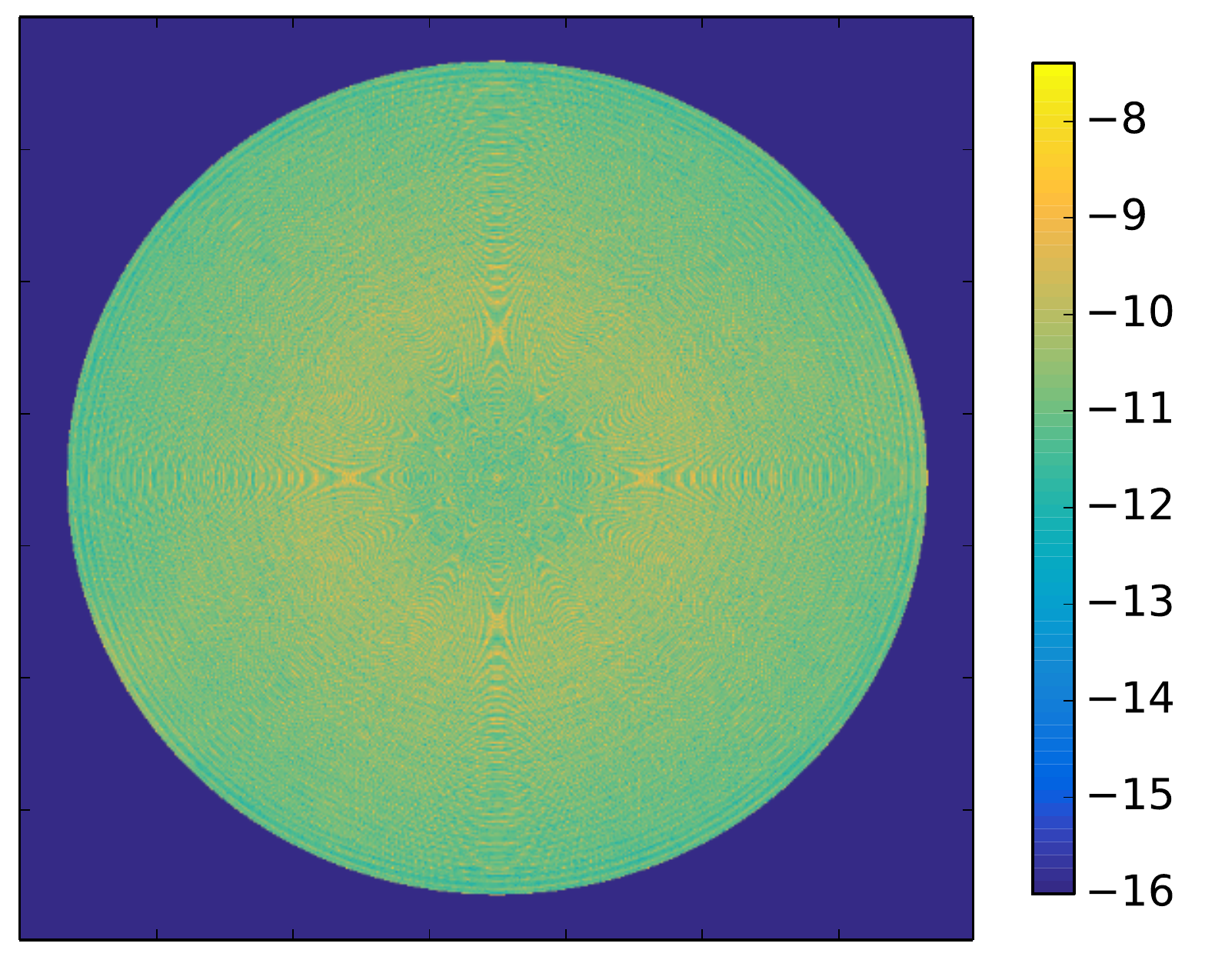}
        \caption{Error in the solution ($\log_{10}$): \\$\hspace{ 0.5 cm}\log_{10} (|\Delta u +k^2(1+q({\bf r}))u|/k^2)$}
        \label{fig:eaterr}
    \end{subfigure}
    \caption{Numerical results for the scattering of a Gaussian beam from an Eaton lens with a diameter of $4 \pi$ at a frequency of $k=30.$}\label{fig:gaubeam}
\end{figure}

\subsection{Time domain problems}
In this section we present numerical illustrations of the application of Algorithm \ref{algo_qumu} to two-dimensional time-dependent scattering problems. For time-dependent scattering problems the displacement $u:\mathbb{R}^2 \times [0,\infty) \to \mathbb{C}$ satisfies the initial value problem
\begin{align}
&\Delta u({\bf r},t) - (1+Q({\bf r})) \frac{\partial^2}{\partial t^2} u({\bf r},t) = f({\bf r},t)\\
&u({\bf r},0) = 0\\
&\frac{\partial}{\partial t} u({\bf r},0) = 0.
\end{align}
In the following we assume that the potential $Q:\mathbb{R}^2 \to [q_0,q_1]$ is a radially-symmetric function compactly supported on a ball of radius $b.$ Moreover, we assume that the source $f$ has the following two properties:
\begin{enumerate}
\item $f({\bf r},t) =0$ for all $\|{\bf r}\| \le b,t\in[0,\infty),$
\item for all ${\bf r} \in \mathbb{R}^2,$ $f({\bf r},t)$ is a $C^\infty$ function of $t,$ compactly supported in some interval $[0,T].$
\end{enumerate}

If $\tilde{u}({\bf r},k)$ denotes the Fourier transform of $u$ with respect to time evaluated at frequency $k,$
\begin{align}
\tilde{u}({\bf r},k) = \frac{1}{(2\pi)^2}\int_{-\infty}^\infty e^{-ikt}\,u({\bf r},t)\,{\rm d}t,
\end{align}
then $\tilde{u}$ satisfies the Helmholtz equation (see, for example, \cite{born})
\begin{align}
\Delta \tilde{u}({\bf r},k) + k^2 (1+Q({\bf r})) \tilde{u}({\bf r},k)=\tilde{f}({\bf r},k)
\end{align}
where $\tilde{f}$ is the Fourier transform of the source $f.$ Moreover, given $\tilde{u}$ one can compute $u$ via the inverse Fourier transform 
\begin{align}\label{eqn:tdep_int}
{u}({\bf r},t) = \int_{-\infty}^\infty e^{ikt}\,\tilde{u}({\bf r},k)\,{\rm d}k.
\end{align}
We observe that since $f$ is smooth in time its Fourier transform decays rapidly with $k.$ In particular, for some constant $K$ depending on $f$ the interval of integration $(-\infty,\infty)$ in (\ref{eqn:tdep_int}) can be replaced by a finite interval $[-K,K]$ with essentially no loss in accuracy.

As an example to illustrate this approach we consider the problem of scattering from a Luneburg lens (see Figure \ref{fig:lunpot}) with an incoming source $f$ given by
$$f({\bf r},t) =\sqrt{8}\, \delta(x-10) \delta(y-10) e^{-4(t-10)^2}$$
where ${\bf r} = (x,y).$ The solution was calculated by solving the problem at $516$ frequencies in the interval $[-16,16].$ The intervals $[2,16]$ and $[-16,-2]$ were discretized using a 200-point Gauss-Legendre quadrature while the interval $[-2,2]$ was discretized using a custom generalized Gaussian quadrature.

\begin{figure}
    \centering
    \begin{subfigure}[b]{0.45\textwidth}
    \centering
        \includegraphics[width=0.85 \textwidth]{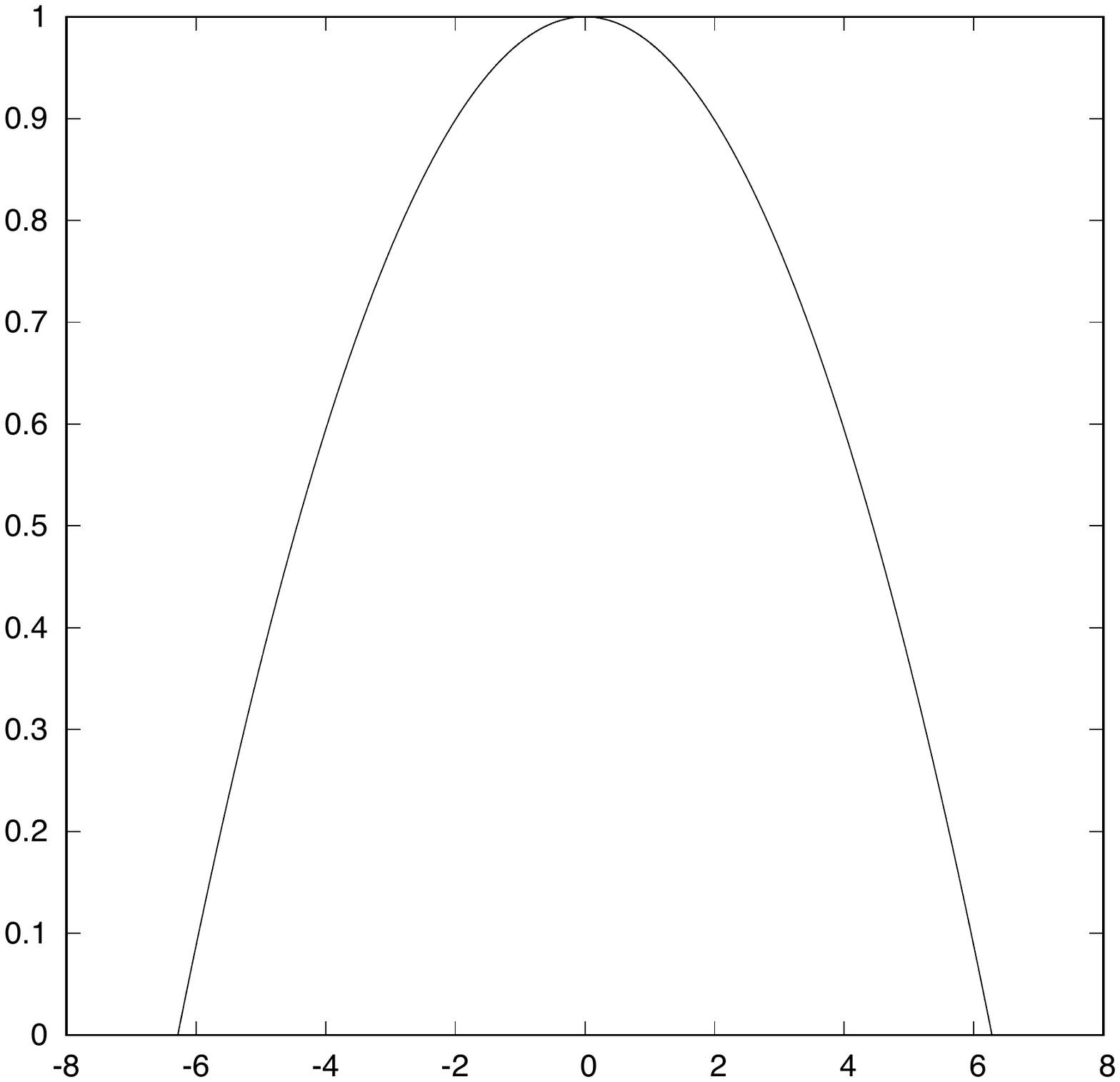}
        \caption{Radial section of the Luneburg lens}
        \label{fig:lunpot}
    \end{subfigure}
    \centering
    \begin{subfigure}[b]{0.45\textwidth}
    \centering
        \includegraphics[width=1 \textwidth]{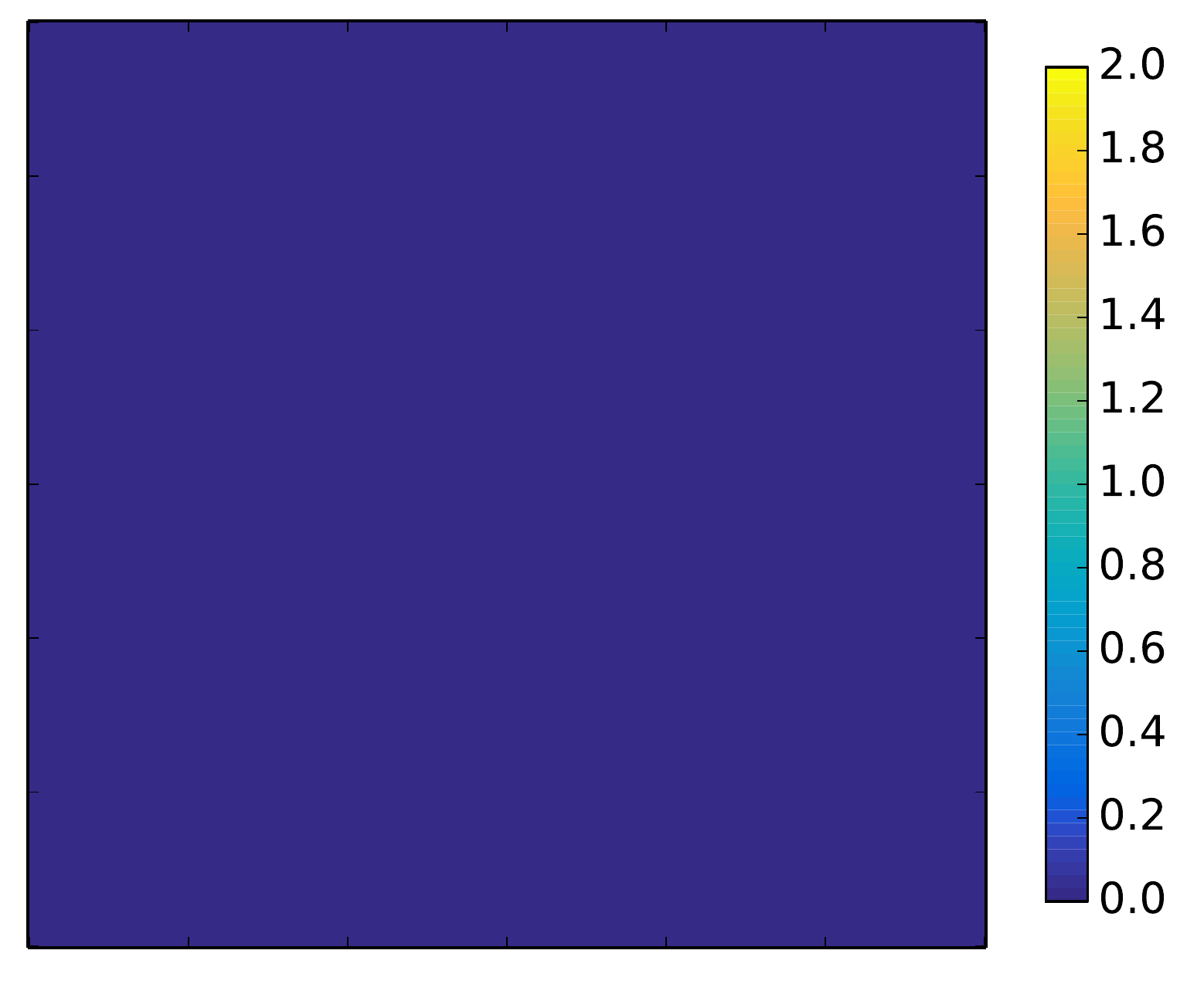}
        \caption{Magnitude of the field at $t=13.6$ s}
        \label{fig:timp1}
    \end{subfigure}
    ~ 
    \begin{subfigure}[b]{0.45\textwidth}
        \includegraphics[width=\textwidth]{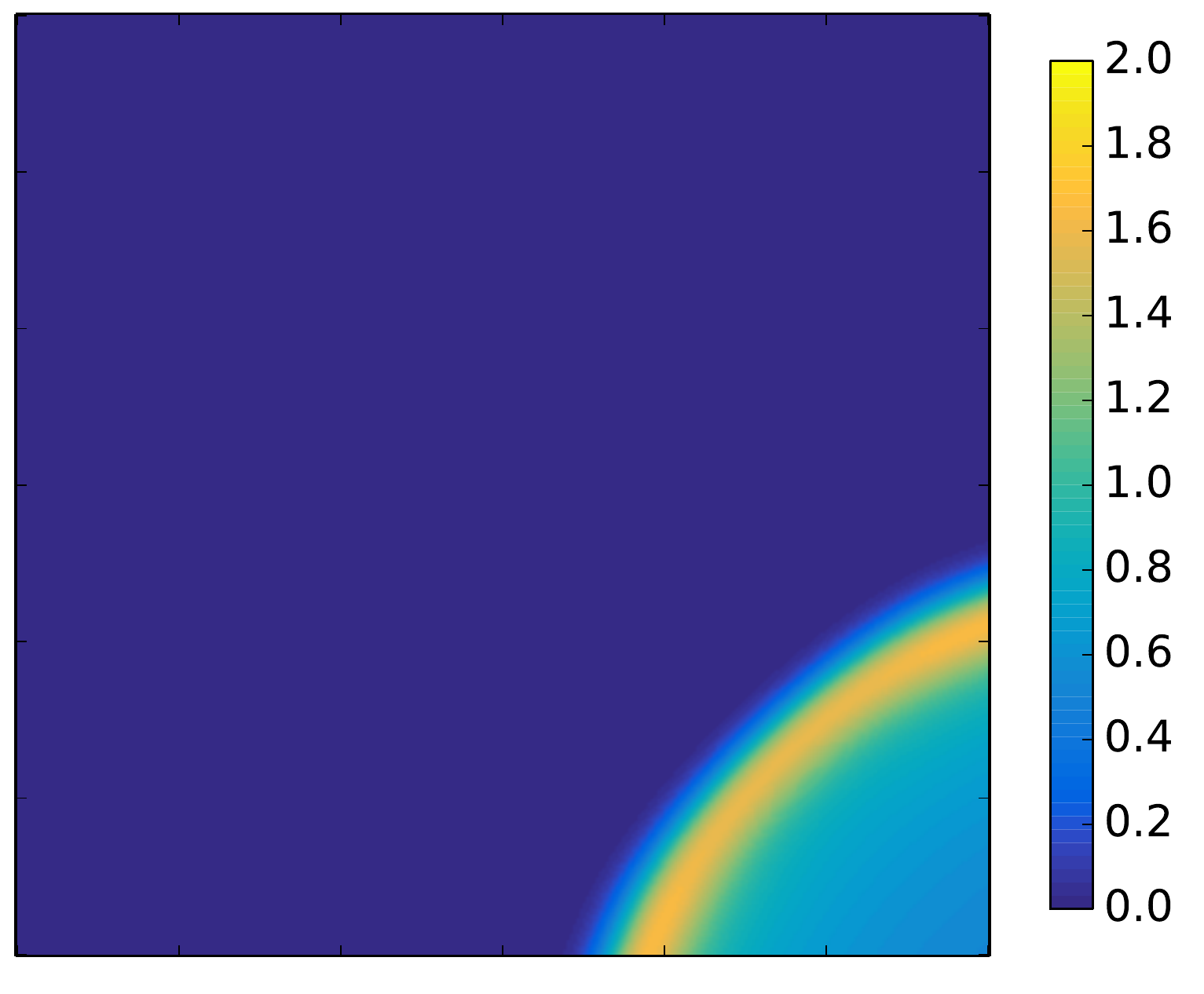}
        \caption{Magnitude of the field at $t = 19$ s}
        \label{fig:timp2}
    \end{subfigure}
    ~ 
    \begin{subfigure}[b]{0.45\textwidth}
    \centering
        \includegraphics[width=\textwidth]{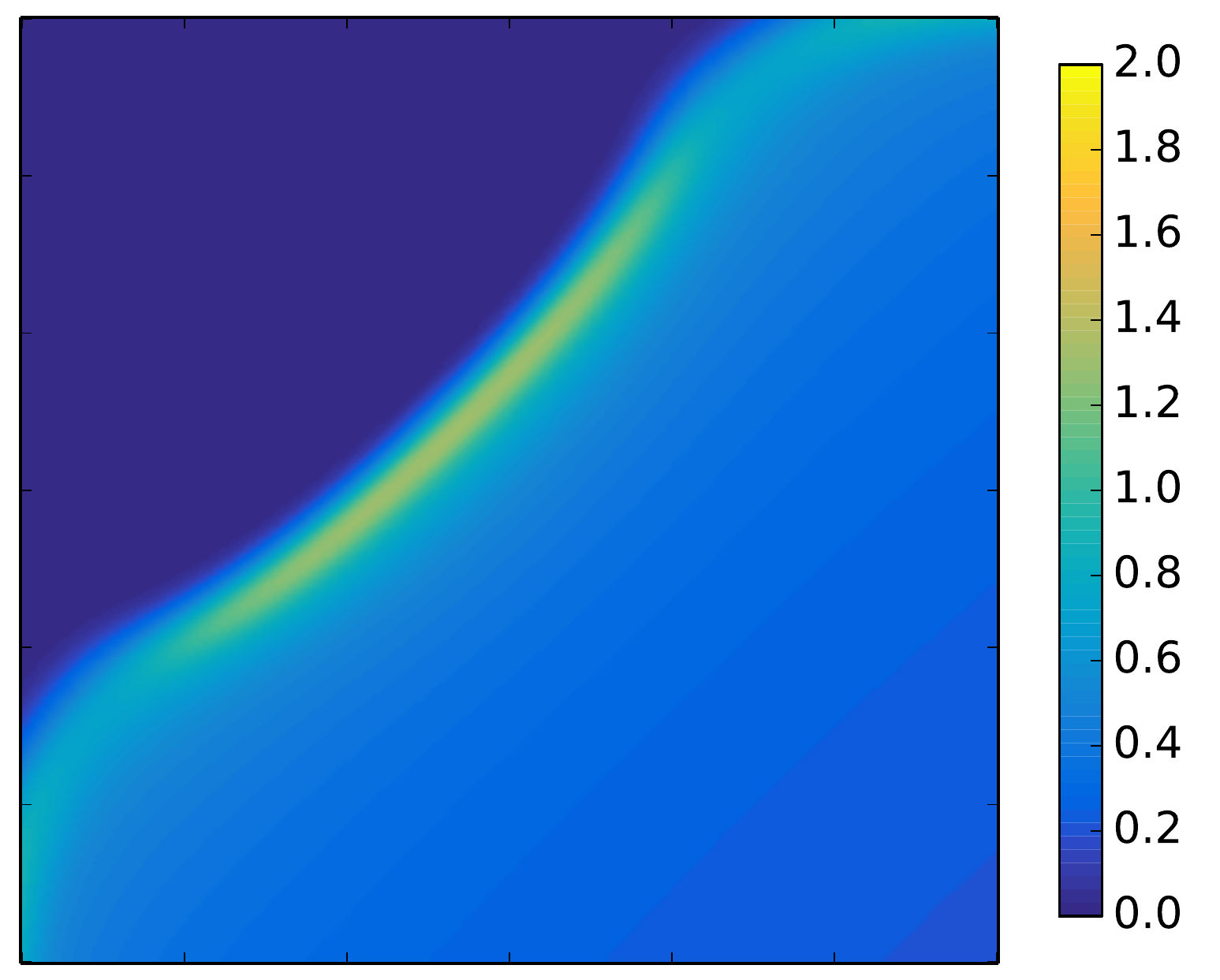}
        \caption{Magnitude of the field at $t=28$ s}
        \label{fig:timp3}
    \end{subfigure}
        \begin{subfigure}[b]{0.45\textwidth}
    \centering
        \includegraphics[width=\textwidth]{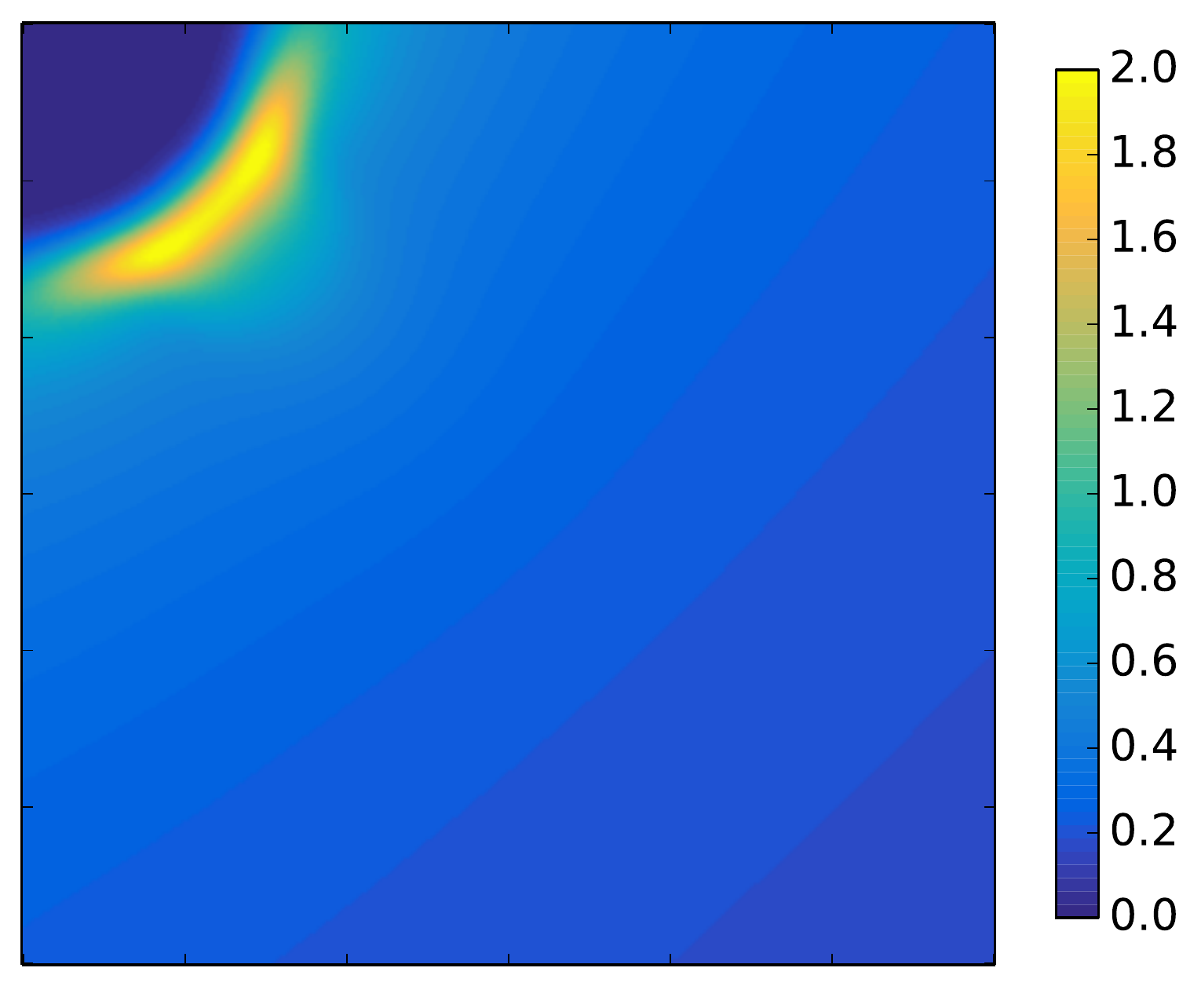}
        \caption{Magnitude of the field at $t = 34$ s}
        \label{fig:timp4}
    \end{subfigure}
        \begin{subfigure}[b]{0.45\textwidth}
    \centering
        \includegraphics[width=\textwidth]{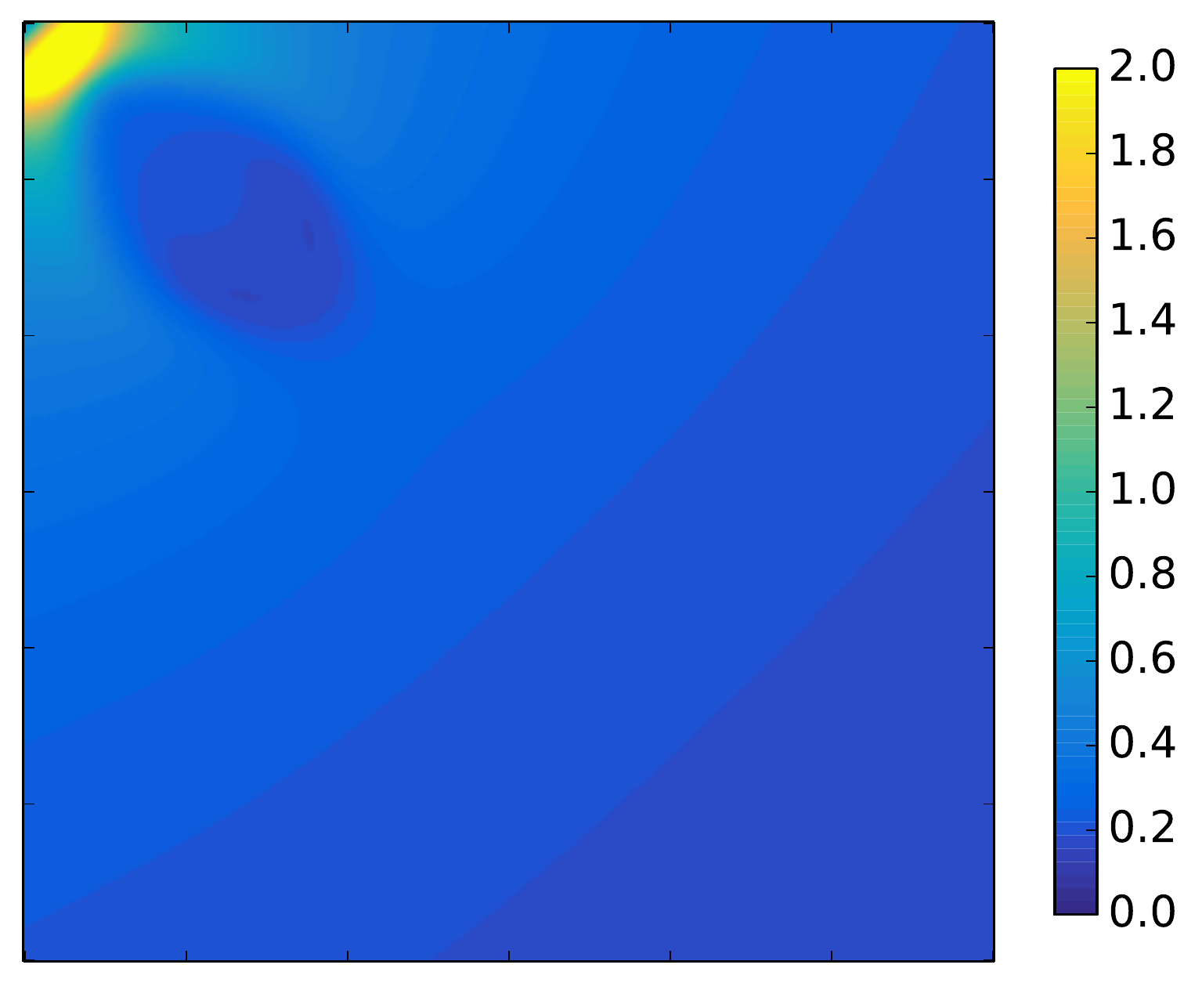}
        \caption{Magnitude of the field at $t = 37$ s }
        \label{fig:timp5}
    \end{subfigure}
    \caption{Numerical results for the scattering of a wave from a Luneburg lens.}\label{fig:timedep}
\end{figure}

\section{Conclusion and future work}\label{sec:concl}
In this paper we described a fast, adapative, simple, and accurate method for computing the scattering from a radially-symmetric body in two dimensions. The algorithm is based on taking Fourier series in the angular variable and solving the resulting equations mode by mode using a fast adaptive solver based on scattering matrices. Numerical experiments were performed which demonstrate the performance of the algorithm. We observe that a similar approach can be employed for three-dimensional radially-symmetric scattering problems as well as for waveguides with constant cross-sectional parameters. Finally, a natural extension to this algorithm would be to collections of compactly-supported radially-symmetric scatterers, which arise in problems in optics and the study of wave propagation in disordered media.
\vspace{2 cm}

Both authors were supported in part by AFOSR FA9550-16-1-0175 and by the ONR N00014-14-1-0797.

\bibliographystyle{siam}

\end{document}